\definecolor{verylight}{gray}{0.97}
\definecolor{light}{gray}{0.9}
\definecolor{medium}{gray}{0.85}
\definecolor{dark}{gray}{0.6}
\def\NZQ{\mathbb}               % the font for N,Z,Q,R,C
\def\QQ{{\NZQ Q}}
\def\ZZ{{\NZQ Z}}
\def\frk{\mathfrak}               % font for "Fraktur"
\def\mm{{\frk m}}
\def\G{{\mathcal G}}
\def\pd{\textup{proj}\phantom{.}\!\textup{dim}}
\def\opn#1#2{\def#1{\operatorname{#2}}} % to make operators
\opn\chara{char} \opn\length{\ell} \opn\pd{pd} \opn\rk{rk}
\opn\projdim{proj\,dim} \opn\injdim{inj\,dim} \opn\rank{rank}
\opn\depth{depth} \opn\grade{grade} \opn\height{height}
\opn\embdim{emb\,dim} \opn\codim{codim}
\opn\Tr{Tr} \opn\bigrank{big\,rank}
\opn\superheight{superheight}\opn\lcm{lcm}
\opn\trdeg{tr\,deg}%\emph{
	\opn\reg{reg} \opn\lreg{lreg} \opn\ini{in} \opn\lpd{lpd}
	\opn\size{size} \opn\sdepth{sdepth}
	\opn\link{link}\opn\fdepth{fdepth}\opn\lex{lex}
	\opn\tr{tr}
	\opn\type{type}
	\opn\gap{gap}
	\opn\diam{diam}
	\opn\Mod{Mod}
	\opn\div{div} \opn\Div{Div} \opn\cl{cl} \opn\Cl{Cl}
	\opn\Spec{Spec} \opn\Supp{Supp} \opn\supp{supp} \opn\Sing{Sing}
	\opn\Ass{Ass} \opn\Min{Min}\opn\Mon{Mon}
	\opn\Ann{Ann} \opn\Rad{Rad} \opn\Soc{Soc}
	\opn\Im{Im} \opn\Ker{Ker} \opn\Coker{Coker} \opn\Am{Am}
	\opn\Hom{Hom} \opn\Tor{Tor} \opn\Ext{Ext} \opn\End{End}
	\opn\Aut{Aut} \opn\id{id}
	\opn\nat{nat}
	\opn\pff{pf}%   \pf exists already
	\opn\Pf{Pf} \opn\GL{GL} \opn\SL{SL} \opn\mod{mod} \opn\ord{ord}
	\opn\Gin{Gin} \opn\Hilb{Hilb}\opn\sort{sort}
	\opn\PF{PF}\opn\Ap{Ap}
	\opn\dist{dist}
	\opn\aff{aff}
	\opn\relint{relint} \opn\st{st}
	\opn\lk{lk} \opn\cn{cn} \opn\core{core} \opn\vol{vol}  \opn\inp{inp} \opn\nilpot{nilpot}
	\opn\link{link} \opn\star{star}\opn\lex{lex}\opn\set{set}
	\opn\width{wd}
	\opn\Fr{F}
	\opn\QF{QF}
	\opn\G{G}
	\opn\type{type}\opn\res{res}
	\opn\conv{conv}
	\opn\sr{sr}
	\opn\gr{gr}
	\def\pot#1#2{#1[\kern-0.28ex[#2]\kern-0.28ex]}
	\opn\dirlim{\underrightarrow{\lim}}
	\opn\inivlim{\underleftarrow{\lim}}
	\let\tensor=\otimes
	\def\Implies{\ifmmode\Longrightarrow \else
		\unskip${}\Longrightarrow{}$\ignorespaces\fi}
	\def\implies{\ifmmode\Rightarrow \else
		\unskip${}\Rightarrow{}$\ignorespaces\fi}
	\def\iff{\ifmmode\Longleftrightarrow \else
		\unskip${}\Longleftrightarrow{}$\ignorespaces\fi}
	\newtheorem{Theorem}{Theorem}[section]
	\newtheorem{Lemma}[Theorem]{Lemma}
	\newtheorem{Corollary}[Theorem]{Corollary}
	\newtheorem{Proposition}[Theorem]{Proposition}
	\theoremstyle{definition}
	\newtheorem{Remark}[Theorem]{Remark}
	\newtheorem{Example}[Theorem]{Example}
	\newtheorem{Examples}[Theorem]{Examples}
	\newtheorem{Conjecture}[Theorem]{Conjecture}
	\newtheorem{Question}[Theorem]{Question}
	\let\epsilon\varepsilon
	\let\kappa=\varkappa
	\def\qed{\ifhmode\textqed\fi
		\ifmmode\ifinner\hfill\quad\qedsymbol\else\dispqed\fi\fi}
	\def\textqed{\unskip\nobreak\penalty50
		\hskip2em\hbox{}\nobreak\hfill\qedsymbol
		\parfillskip=0pt \finalhyphendemerits=0}
	\def\dispqed{\rlap{\qquad\qedsymbol}}
	\opn\dis{dis}
	\def\pnt{{\raise0.5mm\hbox{\large\bf.}}}
	\opn\Lex{Lex}
	\opn\Max{Max}
	\opn\Shad{Shad}
	\opn\astab{astab}
	\def\p{\mathfrak{p}}
	\def\q{\mathfrak{q}}
	\def\m{\mathfrak{m}}
	\opn\v{v}
	\def\vstab{\textup{v-stab}}
\begin{document}

\title{The $\textup{v}$-function of powers of sums of ideals}	
\author{Antonino Ficarra, Pedro Macias Marques}

\address{Antonino Ficarra, Departamento de Matem\'{a}tica, Escola de Ci\^{e}ncias e Tecnologia, Centro de Investiga\c{c}\~{a}o, Matem\'{a}tica e Aplica\c{c}\~{o}es, Instituto de Investiga\c{c}\~{a}o e Forma\c{c}\~{a}o Avan\c{c}ada, Universidade de \'{E}vora, Rua Rom\~{a}o Ramalho, 59, P--7000--671 \'{E}vora, Portugal}
\email{antonino.ficarra@uevora.pt\,\,\,\,\,\,\,\,antficarra@unime.it}

\address{Pedro Macias Marques, Departamento de Matem\'{a}tica, Escola de Ci\^{e}ncias e Tecnologia, Centro de Investiga\c{c}\~{a}o, Matem\'{a}tica e Aplica\c{c}\~{o}es, Instituto de Investiga\c{c}\~{a}o e Forma\c{c}\~{a}o Avan\c{c}ada, Universidade de \'{E}vora, Rua Rom\~{a}o Ramalho, 59, P--7000--671 \'{E}vora, Portugal}
\email{pmm@uevora.pt}

\thanks{
}

\subjclass[2020]{Primary 13F20; Secondary 13F55, 05C70, 05E40.}

\keywords{$\textup{v}$-number, primary decomposition, associated primes, monomial ideals}

\maketitle
\vspace*{-0.4cm}
\begin{abstract}
	Let $K$ be a field, $I\subset R=K[x_1,\dots,x_n]$ and $J\subset T=K[y_1,\dots,y_m]$ be graded ideals. Set $S=R\otimes_KT$ and let $L=IS+JS$. The behaviour of the $\textup{v}$-function $\textup{v}(L^k)$ in terms of the $\textup{v}$-functions $\textup{v}(I^k)$ and $\textup{v}(J^k)$ is investigated. When $I$ and $J$ are monomial ideals, we describe $\textup{v}(L^k)$, giving an explicit formula involving the local $\textup{v}$-numbers $\textup{v}_{\mathfrak{p}}(I^k)$ and $\textup{v}_{\mathfrak{q}}(J^k)$.
\end{abstract}

\section{Introduction}

Let $R=K[x_1,\dots,x_n]$ be the standard graded polynomial ring over a field $K$ with maximal ideal $\m=(x_1,\dots,x_n)$, and let $I\subset R$ be a graded ideal. In \cite{CSTVV20} Cooper \emph{et al.\ } introduced a new invariant associated to $I$ which they called the $\v$-number of $I$, in honor of the mathematician Wolmer Vasconcelos. Let $\p\in\Ass(I)$ be an associated prime. Since $I$ is graded, $\p$ is a graded prime ideal and there exists an homogeneous polynomial $f\in R$ such that $(I:f)=\p$. The \textit{$\v_\p$-number} of $I$ is the least degree of such an element $f$. The \textit{$\v$-number} of $I$ is defined as $\v(I)=\min_{\p\in\Ass(I)}\v_\p(I)$.

By Brodmann \cite{B79}, see also \cite{DMNB23}, the set $\Ass(I^k)$ stabilizes: $\Ass(I^{k+1})=\Ass(I^k)$ for all $k\gg0$. We denote the common sets $\Ass(I^k)$ for $k\gg0$ by $\Ass^\infty(I)$.

In \cite[Theorem 3.1]{FS2}, and independently by Conca in \cite[Theorem 1.1]{Conca23}, it was proved that the functions $k\mapsto\v_\p(I^k)$, for $\p\in\Ass^\infty(I)$, and $k\mapsto\v(I^k)$ are linear for $k\gg0$. By \cite[Theorem 4.1]{FS2}, we have $\v(I^k)=\alpha(I)k+b$ for $k\gg0$, where $b\in\ZZ$ and $\alpha(I)=\min\{d:(I/\m I)_d\ne0\}$ is the \textit{initial degree} of $I$. These results inspired many papers \cite{BM23,BMS24,Conca23,DJS24,F2023,FS2,FSPackA,Fior24,Ghosh24,KS23,KNS24,S2023,VS24}, and a \textit{Macaulay2} package \cite{FSPack}.

Let $G$ be a finite simple graph on the vertex set $V(G)=\{x_1,\dots,x_n\}$ and edge set $E(G)$. The ideal $I(G)=(x_ix_j:\{x_i,x_j\}\in E(G))$ is called the \textit{edge ideal} of $G$. A beautiful result of Biswas, Mandal, and Saha \cite[Theorem 5.1]{BMS24} guarantees that $\v(I(G)^k)=2k-1$ for all $k\ge|E(G)|+1$ if $G$ is a connected graph. If $G$ is an arbitrary graph with $c$ connected components $G_1,\dots,G_c$, then one could expect that $\v(I(G)^k)=2k+(c-2)$ for all $k\gg0$. We show in Corollary \ref{Cor:v(I(G)^k)} that this is indeed the case. Notice that $I(G)=I(G_1)+\dots+I(G_c)$ and the generators of the ideals $I(G_i)$, $i=1,\dots,c$, are monomials in pairwise disjoint sets of variables.

This fact suggest to study the following more general problem. Let $K$ be a field, and let $I\subset R=K[x_1,\dots,x_n]$ and $J\subset T=K[y_1,\dots,y_m]$ be proper homogeneous ideals. Set $S=R\otimes_K T=K[x_1,\dots,x_n,y_1,\dots,y_m]$. By abuse of notation we denote the extended ideals $IS$ ans $JS$ again by $I$ and $J$.
\begin{Question}\label{Ques:v-I-J}
	What can we say about the $\v$-functions $\v((IJ)^k)$ and $\v((I+J)^k)$ in terms of the $\v$-functions $\v(I^k)$ and $\v(J^k)$?
\end{Question}

To address Question \ref{Ques:v-I-J} will be our main concern. The result in Corollary \ref{Cor:v(I(G)^k)}, which was our original motivation for writing this paper, will be a consequence of the more general theory we are going to develop.

Now, we outline the main contents and findings of this paper. In Section \ref{Sec:2-A-P} we address and completely solve Question \ref{Ques:v-I-J} in the case of the product $(IJ)^k$. It is noted in Lemma \ref{Lem:simpleAss} that $\Ass((IJ)^k)=\Ass(I^k)\cup\Ass(J^k)$ for all $k\ge1$. Hence, in Theorem \ref{Thm:Prod} the local $\v$-functions of $(IJ)^k$ are calculated and in Corollary \ref{Cor:Prod} we prove that $\v((IJ)^k)= \min\{\v(I^k)+\alpha(J)k,\,\v(J^k)+\alpha(I)k\}$ for $k\gg0$. These results and the general computation of the $\v$-number rest on the following result due to Conca \cite{Conca23}. Denote by $\Max(I)$ the set of prime ideals of $\Ass(I)$ maximal with respect to inclusion. For an homogeneous ideal $I\subset S$ and $\p\in\Ass(I)$, set
\begin{equation}\label{eq:X_p}
	X_\p\ =\ \begin{cases}
	\displaystyle(\prod_{\p_1\in\Ass(I)\ :\ \p\subsetneq\p_1}\p_1)&\textit{if}\ \ \p\in\Ass(I)\setminus\Max(I),\\
	\hfill S&\hfill\textit{otherwise}.
\end{cases}
\end{equation}

\begin{Lemma}\label{Lem:Conca}
	\textup{\cite[Lemma 1.2]{Conca23}} Let $I\subset S$ be a graded ideal and $\p\in\Ass(I)$. Then,
	$$
	\v_\p(I)\ =\ \alpha\biggl(\frac{(I:\p)}{I:(\p+X_\p^\infty)}\biggr).
	$$
	Here $I:(\p+X_\p^\infty)=\bigcup_{k\ge0}(I:(\p+X_\p^k))$.
\end{Lemma}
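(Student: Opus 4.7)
The plan is to translate membership in $I:(\p+X_\p^\infty)$ into a statement about $\sqrt{(I:f)}$, and then compare with the condition $(I:f)=\p$ via an irredundant primary decomposition $I=\bigcap_{i=1}^{r}Q_i$, where (after merging primary components with equal radicals) each $\p_i=\sqrt{Q_i}$ appears exactly once; relabel so that $\p_1=\p$. Since $X_\p$ is a product of graded primes, both $(I:\p)$ and $I:(\p+X_\p^\infty)=\bigcup_{k\ge 0}I:(\p+X_\p^k)$ are graded, and the latter is contained in the former. So $\alpha\bigl((I:\p)/(I:(\p+X_\p^\infty))\bigr)$ is the least degree of a homogeneous $f\in(I:\p)$ lying outside $I:(\p+X_\p^\infty)$, and the lemma reduces to matching this with $\v_\p(I)$.

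The core claim is that for homogeneous $f\in(I:\p)$, one has $(I:f)=\p$ if and only if $f\notin I:(\p+X_\p^\infty)$. Given $f\in(I:\p)$, membership $f\in I:(\p+X_\p^\infty)$ is equivalent to $fX_\p^k\subseteq I$ for some $k$, i.e.\ to $X_\p\subseteq\sqrt{(I:f)}$. For $f\notin I$ one has $\sqrt{(I:f)}=\bigcap_{i\in S_f}\p_i$ with $S_f=\{i:f\notin Q_i\}$. Moreover, $f\in(I:\p)$ forces $\p\subseteq\p_i$ for every $i\in S_f$: if some $y\in\p$ lay outside $\p_i$, then $fy\in I\subseteq Q_i$ with $y\notin\p_i$ would force $f\in Q_i$, a contradiction. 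Now the decisive dichotomy: $X_\p=\prod_{\p_j\supsetneq\p}\p_j$, so $X_\p\subseteq\p_i$ whenever $\p_i\supsetneq\p$ (the factor $\p_i$ is present in the product), while $X_\p\not\subseteq\p$ itself (otherwise some $\p_j\supsetneq\p$ would be contained in $\p$, impossible). Therefore $X_\p\subseteq\sqrt{(I:f)}$ if and only if every $\p_i$ with $i\in S_f$ strictly contains $\p$, i.e.\ $1\notin S_f$, i.e.\ $f\in Q_1$.

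Finally, in the remaining case $f\notin Q_1$ one has $1\in S_f$ with all $\p_i$ ($i\in S_f$) containing $\p$, so $\sqrt{(I:f)}=\bigcap_{i\in S_f}\p_i\subseteq\p$; together with $(I:f)\supseteq\p$ this yields $(I:f)=\p$. Conversely $(I:f)=\p$ immediately gives $\sqrt{(I:f)}=\p$, hence $X_\p\not\subseteq\sqrt{(I:f)}$. Combining, $I:(\p+X_\p^\infty)=(I:\p)\cap Q_1$, and the homogeneous $f\in(I:\p)$ realizing $(I:f)=\p$ are exactly those not in $Q_1$, proving the lemma. The main obstacle is the bookkeeping around this dichotomy, together with the degenerate case $\p\in\Max(I)$: there $X_\p=S$, so $I:(\p+X_\p^\infty)=I$ and the formula collapses to $\alpha((I:\p)/I)$, which is still correct since no $\p_j\supsetneq\p$ exists in $\Ass(I)$ and every $f\in(I:\p)\setminus I$ automatically satisfies $(I:f)=\p$.
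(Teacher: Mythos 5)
The paper does not actually prove this lemma; it is imported verbatim from Conca \cite[Lemma 1.2]{Conca23}, so there is no in-paper argument to compare with. Your self-contained proof is correct and is essentially the standard primary-decomposition argument one would expect (and close in spirit to Conca's original note). The reduction is sound: both $(I:\p)$ and $I:(\p+X_\p^\infty)$ are graded, the latter sits inside the former, and any homogeneous $f$ with $(I:f)=\p$ automatically lies in $(I:\p)$, so the lemma indeed amounts to showing that for homogeneous $f\in(I:\p)$ one has $(I:f)=\p$ if and only if $f\notin I:(\p+X_\p^\infty)$. Your key equivalences check out: $f\in I:(\p+X_\p^\infty)$ iff $X_\p\subseteq\sqrt{(I:f)}$ (using $f\p\subseteq I$ and finite generation of $X_\p$); $\sqrt{(I:f)}=\bigcap_{i\in S_f}\p_i$ with every $\p_i$, $i\in S_f$, containing $\p$ because $f\in(I:\p)$ and $Q_i$ is $\p_i$-primary; and the dichotomy $X_\p\subseteq\p_i$ for $\p_i\supsetneq\p$ versus $X_\p\not\subseteq\p$ (prime avoidance of the product) pins the condition down to $f\in Q_1$, yielding the clean identity $I:(\p+X_\p^\infty)=(I:\p)\cap Q_1$. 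The converse direction and the degenerate case $\p\in\Max(I)$ (where $X_\p=S$, $I:(\p+X_\p^\infty)=I$, and every homogeneous $f\in(I:\p)\setminus I$ has $\sqrt{(I:f)}=\p$, hence $(I:f)=\p$) are also handled correctly. No gaps worth flagging beyond the routine observation, which you implicitly use, that $f\in I$ forces $f\in Q_1$, so the case $f\in I$ causes no trouble.
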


Regarding the $\v$-function $\v((I+J)^k)$, we do not have yet a complete solution to Question \ref{Ques:v-I-J}. But we can provide a satisfactory answer when $I$ and $J$ are monomial ideals, as we describe next.

We take a moment to justify why we confine ourselves only to monomial ideals. From the computational point of view, it is reasonable to restrict our attention only to monomial ideals. Indeed, the associated primes of a monomial ideal are monomial prime ideals \cite[Corollary 1.3.9]{HH2011}, that is, ideals generated by a subset of the variables. Thus, if $L\subset S$ is a monomial ideal, there are only finitely many candidates that can be prime ideals $\p\in\Ass^\infty(L)$, precisely the $2^{\dim S}-1=2^{n+m}-1$ prime ideals $\p_A=(z:z\in A)$ where $A\subseteq\{x_1,\dots,x_n,y_1,\dots,y_m\}$ is non-empty. For each $\p_A$, Theorem \ref{Thm:pinAss8(I)} is an efficient computational criterion, which was implemented in the \textit{Macaulay2} \cite{GDS} package \texttt{VNumber} \cite{FSPack}, to check if $\p_A\in\Ass^\infty(L)$. This result was firstly noted by Bayati, Herzog, and Rinaldo in \cite[Section 2]{BHR12} and implemented in \cite{BHR11}, and takes advantage of the so-called \textit{monomial localization}. Ordinary localization would destroy the multigraded structure of $L$. On the other hand, when $L$ and $\p$ are arbitrary homogeneous ideals of $S$, one can not use monomial localization and Theorem \ref{Thm:pinAss8(I)} becomes inefficient from the computational point of view as pointed out in \cite[Section 4]{FSPackA}. Moreover, for a general homogeneous ideal $L$, a priori there is no obvious finite set of homogeneous prime ideals which contains $\Ass^\infty(L)$. In \cite{KS19} Kim and Swanson even provided examples of prime ideals of $S$ whose number of associated primes of powers are exponential in the number of variables of $S$.

Coming back to the computation of $\v((I+J)^k)$, notice that for $k\gg0$
$$
\v((I+J)^k)\ =\ \min_{\p\in\Ass^\infty(I+J)}\v_\p((I+J)^k).
$$
Thus, to study this function one has to know the set $\Ass^\infty(I+J)$ in terms of the sets $\Ass^\infty(I)$ and $\Ass^\infty(J)$. This problem was already solved by Nguyen and Tran \cite[Theorem 4.5]{NguyenT}. For an ideal $L\subset S$, we set $\Ass^*(L)=\bigcup_{k\ge1}\Ass(L^k)$. By Brodmann $\Ass^*(L)$ is a finite set \cite{B79}. Nguyen and Tran's Theorem (here Theorem \ref{Thm:AssInfty}) says that
$$
\Ass^\infty(I+J)\ =\ \bigcup_{\substack{\p\in\Ass^*(I)\\ \q\in\Ass^\infty(J)}}\{\p+\q\}\cup\bigcup_{\substack{\p\in\Ass^\infty(I)\\ \q\in\Ass^*(J)}}\{\p+\q\}.
$$
The proof of Nguyen and Tran is based on the computation by H\`{a}, Trung, and Trung of some filtrations of $S/(I+J)^k$ and a well-known formula for $\Ass_S(M\otimes_K N)$ where $M$ is a $R$-module and $N$ is a $T$-module \cite{HTT16}. In Section \ref{Sec:3-A-P} we offer a new and relatively easy proof of this result when $I$ and $J$ are monomial ideals. The key ingredients of this new proof are Theorem \ref{Thm:pinAss8(I)} which was showed in \cite[Theorem 2.1]{FSPackA} and some easy basic facts from Hilbert function and Krull dimension theories. These facts are recalled in Remarks \ref{Rem:elementary1}, \ref{Rem:elementary2} and \ref{Rem:elementary3}. At the moment we do not know if our argument can be extended to all homogeneous ideals $I\subset R$ and $J\subset T$.

In Section \ref{Sec:4-A-P} we prove a formula for the local $\v$-numbers of $(I+J)^k$ for all $k\ge1$, when $I$ and $J$ are monomial ideals (Theorem \ref{Thm:FormulaV-Num-MonId}). For $k=1$ we have $\v(I+J)=\v(I)+\v(J)$, see \cite[Proposition 3.9]{SS20} due to Saha and Sengupta. Due to some experimental evidence we could expect that the formula given in Theorem \ref{Thm:FormulaV-Num-MonId} is valid for any graded ideals $I\subset R$ and $J\subset T$. Some results by Ambhore, Saha, and Sengupta \cite{ASS23} support this expectation.

In \cite[Question 5.1]{FS2} it is asked whether the inequality $\v(L^k)<\reg(L^k)$ holds for all $k\gg0$ and all graded ideals $L\subset S$. In Corollary \ref{Cor:v-reg-prod} we show that to solve this question we may assume that $L$ can not be written as a product $L=IJ$, where $I$ and $J$ are homogeneous ideals whose generators are polynomials in pairwise disjoint sets of variables. In Corollary \ref{Cor:regIneq} we show that in the monomial case, we can also assume that $L$ can not be written as the sum $I+J$ of two proper monomial ideals $I$ and $J$ whose generators are monomials in pairwise disjoint sets of variables.

Finally, Section \ref{Sec:5-A-P} contains several applications of Theorem \ref{Thm:FormulaV-Num-MonId}. It was proved in \cite[Proposition 2.2]{F2023} that for any monomial ideal $L\subset S$ we have the natural lower bound $\v(L^k)\ge\alpha(L)k-1$ for all $k\ge1$. Let $I_1,\dots,I_t\subset S$ be monomial ideals whose generators are monomials in pairwise disjoint sets of variables. Assuming that $\v(I_j^k)=\alpha(I_j)k-1$ for all $j=1,\dots,t$ and all $k\gg0$, in Theorem \ref{Thm:DisjointSupports} we provide for all $k\ge1$ a natural lower bound of $\v((I_1+\dots+I_t)^k)$. Under some additional assumptions this lower bound is achieved. This result is applied to edge ideals, monomial complete intersections and vertex splittable monomial ideals.

\section{The $\v$-function of products}\label{Sec:2-A-P}

Here and in the next section, we maintain the following notation.

Let $K$ be a field, and let $I\subset R=K[x_1,\dots,x_n]$ and $J\subset T=K[y_1,\dots,y_m]$ be proper homogeneous ideals. Let $S=R\otimes_K T=K[x_1,\dots,x_n,y_1,\dots,y_m]$. Notice that for any ideal $L\subset R$ we have $\Ass(L)=\Ass(LS)$ and for any ideal $H\subset T$ we have $\Ass(H)=\Ass(HS)$. This fact justifies our next notational choice. With abuse of notation we denote the extended ideals $IS$ and $JS$ again by $I$ and $J$.\smallskip

The purpose of this section is to compute the (local) $\v$-function of $IJ$ in terms of the (local) $\v$-functions of $I$ and $J$. We begin with the next elementary observation.
\begin{Lemma}\label{Lem:simpleAss}
	With the notation introduced, for all $k\ge1$ we have
	$$
	\Ass((IJ)^k)\ =\ \Ass(I^k)\cup\Ass(J^k).
	$$
	In particular, $\Ass^\infty(IJ)\ =\ \Ass^\infty(I)\cup\Ass^\infty(J)$.
\end{Lemma}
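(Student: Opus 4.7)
My plan is to reduce the identity $\Ass((IJ)^k)=\Ass(I^k)\cup\Ass(J^k)$ to the structural fact that, in $S$, the product and the intersection of two ideals extended from disjoint sets of variables agree, and then to apply the Mayer--Vietoris sequence for one inclusion while realizing each associated prime of $I^k$ or $J^k$ explicitly as a colon ideal for the reverse inclusion. The ``in particular'' statement will then follow immediately by taking the union over $k\gg 0$.

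First, I would establish that $(IJ)^k=I^kJ^k=I^k\cap J^k$ in $S$. Since $S=R\otimes_K T$ is $K$--free and decomposes as a free $R$--module with basis $\{y^\alpha\}$ (and dually as a free $T$--module), the extensions satisfy $I^kS=I^k\otimes_K T$ and $J^kS=R\otimes_K J^k$ as $K$--subspaces of $S$. Their intersection inside $R\otimes_K T$ is $I^k\otimes_K J^k$, which coincides with the product $I^kS\cdot J^kS$ because the latter is generated by the simple tensors $a\otimes b$ with $a\in I^k$, $b\in J^k$. Hence the Mayer--Vietoris sequence
\begin{equation*}
0\,\to\,S/(IJ)^k\,\to\,S/I^k\oplus S/J^k\,\to\,S/(I^k+J^k)\,\to\,0
\end{equation*}
is exact, and passing to associated primes of the injection into a direct sum yields $\Ass((IJ)^k)\subseteq\Ass(I^k)\cup\Ass(J^k)$.

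For the reverse inclusion, fix $\p\in\Ass(I^k)$. Under the notational convention at the start of the section, $\p=\p_0 S$ for some $\p_0\in\Ass_R(R/I^k)$, so there is $f\in R$ with $(I^kS:f)_S=\p$. I would then choose any nonzero $g\in J^k\cap T$ (for instance, the $k$--th power of a generator of $J$) and set $h=fg\in S$. Three short verifications complete the argument. First, $h\notin(IJ)^k$, because $fg\in I^kS$ would force a scalar multiple $g_\alpha f\in I^k$ for some $g_\alpha\in K^*$, contradicting $f\notin I^k$. Second, for any $s\in S$ the condition $sh\in(IJ)^k$ reduces to $sfg\in I^kS$ (since $g\in J^k$ gives $sh\in J^kS$ automatically), which by definition of the colon is equivalent to $sg\in(I^kS:f)_S=\p$. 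Third, $(\p:g)_S=\p$, since $S/\p\cong(R/\p_0)[y_1,\dots,y_m]$ is a domain in which $g$ has nonzero image. Combining these yields $((IJ)^k:h)_S=\p$, so $\p\in\Ass((IJ)^k)$. Symmetry in $I$ and $J$ gives $\Ass(J^k)\subseteq\Ass((IJ)^k)$, completing the proof.

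The main delicacy lies in the colon computation via the $R$--free decomposition of $S$; beyond that no real obstacle arises, because the disjoint-variable setup both preserves primes under extension to $S$ and forces the product and intersection of $I^k$ and $J^k$ to coincide.
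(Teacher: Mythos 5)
Your proof is correct and follows essentially the same route as the paper: the identity $(IJ)^k=I^kJ^k=I^k\cap J^k$ together with the short exact sequence $0\to S/(I^k\cap J^k)\to S/I^k\oplus S/J^k\to S/(I^k+J^k)\to 0$ for one inclusion, and the element $fg$ with $(I^k:f)=\p$ and $g\in J^k\cap T$ for the other. The extra justifications you supply (the tensor-product argument for $I^kS\cap J^kS=I^kS\cdot J^kS$ and the domain argument for $(\p:g)=\p$) are sound and simply make explicit what the paper leaves as immediate observations.
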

\begin{proof}
	Note that $(IJ)^k = I^kJ^k = I^k\cap J^k$. The short exact sequence
	$$
	0\rightarrow S/(I^kJ^k)\rightarrow S/I^k\oplus S/J^k\rightarrow S/(I^k+J^k)\rightarrow 0
	$$
	implies that $\Ass((IJ)^k)\ \subseteq \ \Ass(I^k)\cup\Ass(J^k)$ (see \cite[Lemma~3.6]{Eis94}).
	
	For the opposite inclusion, let $\p\in\Ass(I^k)$. Then, there exists $f\in R$ such that $(I^k:f)=\p$. Let $g\in J^k\cap T$. Since $(IJ)^k=I^kJ^k=I^k\cap J^k$, then
	\begin{equation}\label{eq:colon1}
		((IJ)^k:fg)\ =\ ((I^k\cap J^k):fg)\ =\ (I^k:fg)\cap(J^k:fg).
	\end{equation}
	Notice that
	\begin{equation}\label{eq:colon2}
		(I^k:fg)\ =\ ((I^k:f):g)\ =\ (\p:g)\ =\ \p,
	\end{equation}
	because $g\in T$ while $\p$ is an ideal whose generators are polynomials of $R$. Since $fg\in J^k$ because $g\in J^k$, then $(J^k:fg)=S$. This fact and equations (\ref{eq:colon1}) and (\ref{eq:colon2}) imply that $((IJ)^k:fg)=\p$. Hence $\p\in\Ass((IJ)^k)$. The case $\q\in\Ass(J^k)$ is analogous.
\end{proof}

\begin{Theorem}\label{Thm:Prod}
	With the notation introduced, we have
	\begin{enumerate}
		\item[\textup{(a)}] For all $k\ge1$ and all $\p\in\Ass(I^k)$, we have $\v_\p((IJ)^k)=\v_\p(I^k)+\alpha(J)k$.
		
		\smallskip
		\item[\textup{(b)}] For all $k\ge1$ and all $\q\in\Ass(J^k)$, we have $\v_\q((IJ)^k)=\v_\q(J^k)+\alpha(I)k$.
	\end{enumerate}
\end{Theorem}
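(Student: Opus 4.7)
The plan is to apply Conca's Lemma (Lemma~\ref{Lem:Conca}) to $L=(IJ)^k$ at the prime $\p$, which belongs to $\Ass((IJ)^k)$ by Lemma~\ref{Lem:simpleAss}, and reduce every piece of data to the analogous one for $I^k$. Part (b) will then follow from part (a) by the symmetry between $I$ and $J$.

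The first task is to identify the ideal $X_\p$ of (\ref{eq:X_p}) relative to $(IJ)^k$. Since $\p\in\Ass(I^k)$ is a nonzero prime generated by a subset of $\{x_1,\dots,x_n\}$, while every prime in $\Ass(J^k)$ is generated by a subset of $\{y_1,\dots,y_m\}$, no prime of $\Ass(J^k)$ can strictly contain $\p$. Together with Lemma~\ref{Lem:simpleAss}, this shows that the primes of $\Ass((IJ)^k)$ strictly containing $\p$ are exactly those of $\Ass(I^k)$ strictly containing $\p$, so $X_\p$ is the same whether computed for $(IJ)^k$ or for $I^k$.

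Next, I would compute the two colons appearing in Conca's formula. From $(IJ)^k = I^k \cap J^k$ and the distributivity of the colon over intersection one has $((IJ)^k:H)=(I^k:H)\cap(J^k:H)$ for every ideal $H$. Since $S/J^k$ is a free $R$-module (as $S=R\otimes_K T$ and $J^k$ is generated by polynomials in $T$), every nonzero element of $R$ is a nonzerodivisor on $S/J^k$, hence $(J^k:H)=J^k$ for any nonzero ideal $H$ generated by polynomials of $R$. Applying this with $H=\p$ and $H=\p+X_\p^j$, and commuting the colon with the directed union defining $\p+X_\p^\infty$, gives
\begin{equation*}
((IJ)^k:\p) \;=\; (I^k:\p)\cap J^k, \qquad ((IJ)^k:\p+X_\p^\infty) \;=\; (I^k:\p+X_\p^\infty)\cap J^k.
\end{equation*}

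The final step uses the tensor decomposition $S=R\otimes_K T$ to compute the initial degree. As $K$-vector subspaces of $S$, one has $(I^k:\p)=A\otimes_K T$ and $(I^k:\p+X_\p^\infty)=B\otimes_K T$, where $A$ and $B$ denote the corresponding colon ideals computed in $R$, while $J^k=R\otimes_K J^k$. The elementary linear-algebra identity $(V_1\otimes_K W_2)\cap(V_2\otimes_K W_1)=V_1\otimes_K W_1$ then yields that the quotient appearing in Conca's formula is isomorphic to $(A/B)\otimes_K J^k$ as bigraded $K$-vector space, whose smallest total degree equals $\alpha(A/B)+\alpha(J^k)=\alpha(A/B)+k\alpha(J)$. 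Since Conca's Lemma applied to $I^k$ identifies $\alpha(A/B)=\v_\p(I^k)$, I conclude $\v_\p((IJ)^k)=\v_\p(I^k)+k\alpha(J)$. The main technical point to verify carefully is the tensor-intersection identity, together with the claim that the colons for $I^k$ can be computed in $R$ and extended to $S$; both are routine once one picks compatible $K$-bases of $A\subset R$ and of $J^k\subset T$.
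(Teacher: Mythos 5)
Your proposal is correct and takes essentially the same route as the paper: both apply Conca's Lemma to $(IJ)^k$ at $\p$, observe that $X_\p$ is the same as for $I^k$, compute the colons using $(IJ)^k=I^k\cap J^k$ together with the freeness of $S/J^k$ over $R$, and read the initial degree off the tensor decomposition over $K$ (the paper writes the colon as $J^k(I^k:\p)$, which is the same module as your $(I^k:\p)\cap J^k$). One small caveat: in this section $I$ and $J$ are arbitrary homogeneous ideals, so the primes of $\Ass(J^k)$ need not be generated by subsets of the variables as you state; your non-containment argument still goes through, though, because any such prime is extended from a proper prime of $T$ and therefore meets $R$ only in $0$, so it cannot contain the nonzero prime $\p$ extended from $R$.
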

\begin{proof}
	Let $\p\in\Ass(I^k)$. Since $I$ and $J$ are ideals whose generators are in pairwise disjoint sets of variables, $\p\subset R$ and $\Ass((IJ)^k)=\Ass(I^k)\cup\Ass(J^k)$, we have that $X_\p\subset R$, where $X_\p$ is defined in (\ref{eq:X_p}). By Lemma \ref{Lem:Conca} and what we observed,
	\begin{align*}
		\v_\p((IJ)^k)\ &=\ \alpha\big(\frac{((IJ)^k:\p)}{(IJ)^k:(\p+X_\p^\infty)}\big)\ =\ \alpha\big(\frac{J^k(I^k:\p)}{J^k(I^k:(\p+X_\p^\infty))}\big)\\
		&=\ \alpha\big(\frac{(I^{k}:\p)}{(I^{k}:(\p+X_\p^\infty))}\big)+\alpha(J^{k})\\
		&=\ \v_\p(I^k)+\alpha(J)k.
	\end{align*}
	Statement (a) follows. The proof of statement (b) is analogous.
\end{proof}

\begin{Corollary}\label{Cor:Prod}
	With the notation introduced, for all $k\gg0$ we have
	$$
	\v((IJ)^k)\ =\ \min\{\v(I^k)+\alpha(J)k,\,\v(J^k)+\alpha(I)k\}.
	$$
\end{Corollary}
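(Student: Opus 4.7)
The plan is to combine Lemma \ref{Lem:simpleAss} with Theorem \ref{Thm:Prod} and then pass to the asymptotic regime where Brodmann's stabilization of associated primes kicks in.

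First, by definition $\v((IJ)^k)=\min_{\mathfrak{r}\in\Ass((IJ)^k)}\v_{\mathfrak{r}}((IJ)^k)$. For all $k\gg 0$ we have $\Ass(I^k)=\Ass^\infty(I)$ and $\Ass(J^k)=\Ass^\infty(J)$ by Brodmann, so Lemma \ref{Lem:simpleAss} gives $\Ass((IJ)^k)=\Ass^\infty(I)\cup\Ass^\infty(J)$ for $k\gg 0$. Next, I would split the minimum according to whether $\mathfrak{r}$ lies in $\Ass^\infty(I)$ or $\Ass^\infty(J)$; by Theorem \ref{Thm:Prod}(a) the contribution from primes in $\Ass^\infty(I)$ is
\[
\min_{\mathfrak{p}\in\Ass^\infty(I)}\v_{\mathfrak{p}}((IJ)^k)\ =\ \min_{\mathfrak{p}\in\Ass^\infty(I)}\bigl(\v_{\mathfrak{p}}(I^k)+\alpha(J)k\bigr)\ =\ \v(I^k)+\alpha(J)k,
\]
where the last equality again uses that for $k\gg 0$ the minimum of $\v_{\mathfrak{p}}(I^k)$ over $\Ass^\infty(I)$ agrees with $\v(I^k)$. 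Symmetrically, Theorem \ref{Thm:Prod}(b) handles primes in $\Ass^\infty(J)$ and yields $\v(J^k)+\alpha(I)k$. Taking the minimum of the two contributions gives the claimed formula.

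There is essentially no obstacle here: the statement is a formal consequence of the previous two results once one observes that the ``min over $\Ass$'' and the ``min over $\Ass^\infty$'' coincide eventually. The only point to be mildly careful about is the bookkeeping of the ``for $k\gg 0$'' quantifier — one needs $k$ large enough so that simultaneously $\Ass(I^k)=\Ass^\infty(I)$, $\Ass(J^k)=\Ass^\infty(J)$, and the minimum $\v(I^k)$ (respectively $\v(J^k)$) is already attained by a prime in $\Ass^\infty(I)$ (respectively $\Ass^\infty(J)$). All three conditions hold for $k$ sufficiently large, so the union of the two thresholds works.
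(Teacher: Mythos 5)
Your argument is correct and is exactly the intended derivation: the paper states this corollary without a separate proof, as an immediate consequence of Lemma \ref{Lem:simpleAss} and Theorem \ref{Thm:Prod}, which is precisely how you argue. The only remark is that your detour through Brodmann stabilization and $\Ass^\infty$ is unnecessary, since Lemma \ref{Lem:simpleAss} and Theorem \ref{Thm:Prod} hold for every $k\ge 1$, so the displayed formula actually holds for all $k\ge 1$ and the ``$k\gg0$'' bookkeeping can be dropped.
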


\begin{Corollary}\label{Cor:v-reg-prod}
	With the notation introduced, suppose that $\v(I^k)<\reg(I^k)$ and $\v(J^k)<\reg(J^k)$ for all $k\gg0$. Then $\v((IJ)^k)<\reg((IJ)^k)$ for all $k\gg0$.
\end{Corollary}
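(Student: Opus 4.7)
The plan is to combine the formula from Corollary~\ref{Cor:Prod} with a matching lower bound for $\reg((IJ)^k)$, which we obtain from the fact that $I$ and $J$ live in disjoint sets of variables.

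First I would recall that, by Corollary~\ref{Cor:Prod}, for $k\gg 0$
$$
\v((IJ)^k)\ =\ \min\{\v(I^k)+\alpha(J)k,\ \v(J^k)+\alpha(I)k\}.
$$
So it suffices to show that each of the two quantities inside the minimum is strictly smaller than $\reg((IJ)^k)$. Since $\alpha(J^k)=\alpha(J)k\le\reg(J^k)$ (and symmetrically for $I$), together with the hypothesis $\v(I^k)<\reg(I^k)$ (resp.\ $\v(J^k)<\reg(J^k)$), we immediately get
$$
\v(I^k)+\alpha(J)k\ <\ \reg(I^k)+\reg(J^k),\qquad \v(J^k)+\alpha(I)k\ <\ \reg(I^k)+\reg(J^k).
$$
Thus the corollary reduces to establishing the inequality $\reg((IJ)^k)\ge\reg(I^k)+\reg(J^k)$ for $k\gg 0$.

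The key step is the regularity identity. Because the generators of $I$ and $J$ involve disjoint variables, multiplication induces a graded $S$-module isomorphism
$$
I^k\otimes_K J^k\ \xrightarrow{\ \sim\ }\ I^kJ^k\ =\ (IJ)^k,
$$
where the $S=R\otimes_K T$-action on the left is the natural one. By the Künneth formula for $\Tor$, applied to an $R$-module $M$ and a $T$-module $N$ tensored over $K$, one has $\reg_S(M\otimes_K N)=\reg_R(M)+\reg_T(N)$. Applying this to $M=I^k$ and $N=J^k$ gives
$$
\reg((IJ)^k)\ =\ \reg(I^k)+\reg(J^k),
$$
which is even stronger than what is needed. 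Alternatively, one can deduce the same lower bound from the short exact sequence
$$
0\to S/(I^kJ^k)\to S/I^k\oplus S/J^k\to S/(I^k+J^k)\to 0
$$
together with $S/(I^k+J^k)\cong (R/I^k)\otimes_K(T/J^k)$, whose regularity equals $\reg(R/I^k)+\reg(T/J^k)$; for $k\ge 2$ both summands are positive, so the third term of the sequence forces $\reg(S/(IJ)^k)\ge \reg(R/I^k)+\reg(T/J^k)+1$, i.e.\ $\reg((IJ)^k)\ge\reg(I^k)+\reg(J^k)$.

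The main (mild) obstacle is justifying the regularity identity in a self-contained way. The Künneth argument is standard but requires checking that the multiplication map is actually an isomorphism of \emph{graded $S$-modules}, which uses crucially that no variable appears in both $I$ and $J$. Once the inequality $\reg((IJ)^k)\ge\reg(I^k)+\reg(J^k)$ is available, combining it with the two strict inequalities above yields $\v((IJ)^k)<\reg((IJ)^k)$ for all $k\gg 0$, completing the proof.
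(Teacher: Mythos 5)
Your proof is correct and follows essentially the same route as the paper: combine Corollary~\ref{Cor:Prod} with the bounds $\alpha(I)k\le\reg(I^k)$, $\alpha(J)k\le\reg(J^k)$ and the additivity $\reg((IJ)^k)=\reg(I^k)+\reg(J^k)$. The only difference is that the paper simply cites \cite[Corollary 3.2]{HRR} for this additivity, whereas you re-derive the needed inequality yourself (via the K\"unneth/tensor-of-resolutions argument, or the short exact sequence together with $S/(I^k+J^k)\cong (R/I^k)\otimes_K(T/J^k)$), both of which are sound because $I$ and $J$ involve disjoint sets of variables.
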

\begin{proof}
	Notice that $\reg(I^k)\ge\alpha(I)k$ and $\reg(J^k)\ge\alpha(J)k$ for all $k\ge1$. Moreover, by \cite[Corollary 3.2]{HRR} we have $\reg((IJ)^k)=\reg(I^k)+\reg(J^k)$. Using the assumption, we obtain that $\reg((IJ)^k)>\v(I^k)+\alpha(J)k$ and also $\reg((IJ)^k)>\alpha(I)k+\v(J^k)$ for all $k\gg0$. Applying Corollary \ref{Cor:Prod} the assertion follows.
\end{proof}

\section{The set of stable primes of sums of monomial ideals}\label{Sec:3-A-P}

Let $K$ be a field, and let $I\subset R=K[x_1,\dots,x_n]$ and $J\subset T=K[y_1,\dots,y_m]$ be proper homogeneous ideals. Set $S=R\otimes_K T=K[x_1,\dots,x_n,y_1,\dots,y_m]$.

We set $\Ass^*(I)=\bigcup_{k\ge1}\Ass(I^k)$. By Brodmann, $\Ass^*(I)$ is a finite set \cite{B79}. The following result was shown by Nguyen and Tran \cite[Theorem 4.5]{NguyenT}.

\begin{Theorem}\label{Thm:AssInfty}
	With the notation introduced, we have
	$$
	\Ass^\infty(I+J)\ =\ \bigcup_{\substack{\p\in\Ass^*(I)\\ \q\in\Ass^\infty(J)}}\{\p+\q\}\cup\bigcup_{\substack{\p\in\Ass^\infty(I)\\ \q\in\Ass^*(J)}}\{\p+\q\}.
	$$
\end{Theorem}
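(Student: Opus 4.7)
The plan is first to reduce to the case where the prime in question is the irrelevant maximal ideal, and then to analyze the colon ideal $((I+J)^k\colon h)$ by a direct combinatorial computation. Every $\p \in \Ass^\infty(I+J)$ is a monomial prime of the form $\p_B + \p_C$ with $B \subseteq \{x_1,\dots,x_n\}$ and $C \subseteq \{y_1,\dots,y_m\}$. Applying Theorem \ref{Thm:pinAss8(I)} (which characterizes $\Ass^\infty$ via monomial localization) together with the identity $(I+J)(A) = I(B)+J(C)$ for $A = B \cup C$, the problem reduces to the case $\p = \m_R$, $\q = \m_T$, the irrelevant maximal ideals of $R$ and $T$. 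In this reduced form the assertion reads: $\m := \m_R + \m_T \in \Ass^\infty(I+J)$ if and only if either $\m_R \in \Ass^*(I)$ and $\m_T \in \Ass^\infty(J)$, or $\m_R \in \Ass^\infty(I)$ and $\m_T \in \Ass^*(J)$.

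For a monomial $u \in R$ set $a(u) = \max\{i : u \in I^i\}$, and for a monomial $v \in T$ set $b(v) = \max\{j : v \in J^j\}$. The central technical ingredient is the inequality $b(zv) \le b(v)+1$ for every variable $z \in T$ and every monomial $v$: writing $zv = w_1\cdots w_{b+2}\cdot h$ with $w_i$ minimal monomial generators of $J$ and $h$ a monomial, either $z \mid h$ (and then $v \in J^{b+2}$), or $z \mid w_i$ for some $i$, in which case cancelling $z$ from that generator presents $v$ as a product of $b+1$ minimal generators times a monomial, placing $v$ in $J^{b+1}$. The analogous statement holds for $a$. Since a monomial witness $g$ of $\m_T \in \Ass(J^j)$ satisfies $y_\nu g \in J^j$ for every $\nu$, this lemma forces $j \le b(g)+1$, so $b(g) = j-1$; and, by the elementary observation that $(I^i\colon f) = \m_R$ for all $a(f)+1 \le i \le s$ whenever $(I^s\colon f) = \m_R$, any monomial witness $f$ of $\m_R \in \Ass(I^{s_0})$ with $s_0$ minimal satisfies $a(f) = s_0-1$.

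The third step is the colon computation itself. From $(I+J)^k = \sum_{i+j=k} I^i J^j$ and the disjoint-variables assumption, a monomial $x^\gamma y^\delta$ lies in $((I+J)^k\colon fg)$ if and only if there are $i+j=k$ with $x^\gamma f \in I^i$ and $y^\delta g \in J^j$. A direct inspection of the cases $\gamma = 0$, $\delta = 0$, and $\gamma,\delta \ne 0$ then yields the characterization
\[
((I+J)^k\colon fg) = \m \iff a(f)+b(g) < k, \ f\text{ witnesses }\m_R \in \Ass(I^{k-b(g)}), \ g\text{ witnesses }\m_T \in \Ass(J^{k-a(f)}).
\]
For the necessity direction, assume $\m \in \Ass((I+J)^k)$ for all large $k$ with monomial witnesses $f_k g_k$. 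Since $(k-a(f_k)) + (k-b(g_k)) > k$, one of the two exponents grows without bound along a subsequence, and Brodmann's eventual stability of $\Ass(I^i)$ and $\Ass(J^j)$ forces one of $\m_R \in \Ass^\infty(I)$ or $\m_T \in \Ass^\infty(J)$; the remaining prime is then automatically in $\Ass^*$. For sufficiency, assume (say) $\m_R \in \Ass^*(I)$ with smallest witnessing power $s_0$ and $\m_T \in \Ass^\infty(J)$. Choose a monomial witness $f$ of $\m_R \in \Ass(I^{s_0})$ (so $a(f) = s_0-1$ by the shrinking observation) and, for each $t \ge N$ large enough, a monomial witness $g_t$ of $\m_T \in \Ass(J^t)$ (so $b(g_t) = t-1$ by the key lemma). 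Setting $k = s_0 + t - 1$, all three conditions of the characterization are satisfied: $a(f) + b(g_t) = k-1 < k$, $k - b(g_t) = s_0$, and $k - a(f) = t$. Hence $\m \in \Ass((I+J)^k)$ for every $k \ge s_0 + N - 1$, completing the backward direction.

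The main obstacle is the inequality $b(zv) \le b(v)+1$: without it I have no control over the value $b(g)$ of a witness and cannot construct, for every large $k$, a witness $fg_t$ of $\m \in \Ass((I+J)^k)$. Once this lemma is in place, the proof reduces to bookkeeping with colon ideals together with Brodmann's stability; the ``easy basic facts from Hilbert function and Krull dimension'' promised in the introduction enter at exactly this level, in the elementary manipulations underlying the shrinking observation and the equality $(I^{k-b(g)}\colon f) = \m_R$ above.
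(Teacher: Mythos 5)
Your proof is correct, but it takes a genuinely different route from the paper's. The only step you share is the initial reduction, via monomial localization, to the case of the maximal ideal $\m=\m_x+\m_y$; from there the paper invokes the Rees-algebra/Koszul homology criterion of Theorem \ref{Thm:pinAss8(I)}, the isomorphism $\gr_{I+J}(S)\cong\gr_I(R)\otimes_K\gr_J(T)$ of H\`a--Trung--Trung, the socle identity $C(I+J)=C(I)C(J)$, Ratliff's theorem $(I+J)^{k+1}:(I+J)=(I+J)^k$ for $k\gg0$, and the Hilbert-function/dimension facts of Remarks \ref{Rem:elementary1}--\ref{Rem:elementary3} to translate positivity of Krull dimension into the $\Ass^\infty$/$\Ass^*$ conditions. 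You instead work directly with monomial witnesses: the inequality $b(zv)\le b(v)+1$ (and its analogue for $a$), the exact characterization of when $((I+J)^k:fg)=\m$, and Brodmann's stability. The steps check out: the key lemma forces $b(g)=j-1$ for any witness $g$ of $\m_y\in\Ass(J^j)$ (and the same lemma applied to $I$ already gives $a(f)=s-1$ for any witness of $\m_x\in\Ass(I^s)$, so your detour through the minimal power $s_0$ is not really needed), your colon characterization in fact pins down $a(f)+b(g)=k-1$, and both implications follow as you describe. What your argument buys is per-power information---it essentially recovers Nguyen and Tran's description of $\Ass((I+J)^k)$ for every $k$, cf.\ \cite[Theorem 4.1 (3)]{NguyenT}---and it is much closer in spirit to the colon-ideal computation the paper performs later in the proof of Theorem \ref{Thm:FormulaV-Num-MonId}; what the paper's route buys is a showcase of the Koszul/Rees criterion and the graded-ring isomorphism, machinery one might hope to push beyond the monomial case. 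Two small points of hygiene: the reduction to the maximal ideal does not actually need Theorem \ref{Thm:pinAss8(I)}, only the standard compatibility of monomial localization with associated primes of powers; and you should cite the standard fact (see \cite{HH2011}) that a monomial prime associated to a monomial ideal always admits a monomial witness, which your argument uses throughout.
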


The proof of \cite[Theorem 4.5]{NguyenT} follows by lengthy computations involving some filtrations of $S/(I+J)^k$ and a well-known formula for $\Ass_S(M\otimes_K N)$ where $M$ is a $R$-module and $N$ is a $T$-module \cite[Theorem 2.5]{HNTT20}.

In this section, we provide a more elementary proof when $I$ and $J$ are monomial ideals. The new argument we use is based on the following theorem.
\begin{Theorem}\label{Thm:pinAss8(I)}
	\textup{\cite[Theorem 2.1]{FSPackA}} Let $R=K[x_1,\dots,x_n]$ be a standard graded polynomial ring over a field $K$. Let $I\subset R$ be a homogeneous ideal and $\p\in\Spec(R)$. The following conditions are equivalent:
	\begin{enumerate}
		\item[\textup{(a)}] $\p\in\Ass^\infty(I)$.
		\item[\textup{(b)}] The Krull dimension $\dim H_{\mu(\p R_\p)-1}(\p R_\p\,;\,\mathcal{R}(I)_\p)$ is positive.
	\end{enumerate}
\end{Theorem}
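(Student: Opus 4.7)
The plan is to localize the problem at $\mathfrak{p}$, translate the condition $\mathfrak{p}\in\Ass(I^k)$ into the non-vanishing of a single Koszul homology module in the local setting, and then package these conditions together over all $k$ using the grading on the Rees algebra.

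First, I would pass to the regular local ring $(A,\mathfrak{n}):=(R_\mathfrak{p},\,\mathfrak{p}R_\mathfrak{p})$, set $J:=IR_\mathfrak{p}$, and write $d:=\mu(\mathfrak{n})=\height(\mathfrak{p})$. Since localization commutes with powers, $\mathfrak{p}\in\Ass(I^k)$ is equivalent to $\mathfrak{n}\in\Ass(A/J^k)$, i.e., $\depth(A/J^k)=0$. As $\mathfrak{n}$ is generated by a regular sequence $x_1,\ldots,x_d$, the Koszul characterization of depth reads
$$
\depth(A/J^k)=0 \;\iff\; H_d(x_1,\ldots,x_d;\,A/J^k)\neq 0.
$$
Applying the long exact sequence of Koszul homology to $0\to J^k\to A\to A/J^k\to 0$ and using $H_i(\mathfrak{n};A)=0$ for $i\ge 1$ (regularity of $\underline{x}$), one obtains the natural isomorphism
$$
H_d(\mathfrak{n};\,A/J^k)\;\cong\;H_{d-1}(\mathfrak{n};\,J^k),
$$
so $\mathfrak{p}\in\Ass(I^k) \iff H_{d-1}(\mathfrak{n};J^k)\neq 0$.

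Next, I would assemble these conditions over all $k\ge 0$. Since $\mathcal{R}(I)_\mathfrak{p}=\mathcal{R}(J)=\bigoplus_{k\ge 0}J^k$ as an $A$-module and Koszul homology commutes with direct sums in the coefficients,
$$
H_{d-1}(\mathfrak{n};\,\mathcal{R}(I)_\mathfrak{p})\;=\;\bigoplus_{k\ge 0}H_{d-1}(\mathfrak{n};\,J^k),
$$
which is a $\ZZ$-graded module over the fiber cone $F(J):=\mathcal{R}(J)/\mathfrak{n}\mathcal{R}(J)$, a finitely generated standard graded algebra over the residue field $K:=A/\mathfrak{n}$. Since the Koszul complex is a bounded complex of finitely generated free $\mathcal{R}(J)$-modules, its homology is finitely generated over $\mathcal{R}(J)$, and being annihilated by $\mathfrak{n}$ it is finitely generated over $F(J)$ as well. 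Consequently its Krull dimension is positive if and only if $H_{d-1}(\mathfrak{n};J^k)\neq 0$ for infinitely many $k$. By Brodmann's stability theorem the sets $\Ass(I^k)$ are eventually constant, so "infinitely many $k$" is equivalent to "for all $k\gg 0$", which is exactly the condition $\mathfrak{p}\in\Ass^\infty(I)$.

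The main delicate point is verifying finite generation of $H_{d-1}(\mathfrak{n};\mathcal{R}(J))$ over $F(J)$, which rests on realizing the Koszul complex as a complex over $\mathcal{R}(J)$ (by extension of scalars from $A$) and then descending via the $\mathfrak{n}$-annihilation of the homology. A secondary care point is preserving the Rees grading through localization: one needs $\mathcal{R}(I)\otimes_R R_\mathfrak{p}=\mathcal{R}(IR_\mathfrak{p})$, which holds because $R\setminus\mathfrak{p}$ sits in degree zero of the $t$-grading. Beyond these, the argument is a formal assembly of standard depth-Koszul dictionary together with Brodmann's asymptotic stability.
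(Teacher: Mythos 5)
Your argument is correct and follows essentially the route the paper indicates for this imported result: the identification $H_{\mu(\p R_\p)-1}(\p R_\p;\mathcal{R}(I)_\p)\cong\bigoplus_{k\ge0}(I_\p^{k}:\p R_\p)/I_\p^{k}$ (the paper's equation \eqref{eq:KoszHom}, which you recover via the Koszul long exact sequence and regularity of $R_\p$), finite generation of this graded homology over the fiber cone $\mathcal{R}(I)_\p/\p\mathcal{R}(I)_\p$, the dimension criterion of Remark \ref{Rem:elementary1}, and Brodmann's stability to convert ``nonzero in infinitely many degrees'' into $\p\in\Ass^\infty(I)$. The delicate points you single out (compatibility of the Rees grading with localization, and finite generation over the fiber cone via annihilation by $\p R_\p$) are exactly the right ones and are handled correctly, so I see no gap.
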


Here:
\begin{enumerate}
	\item[-] $\mathcal{R}(I)=\bigoplus_{k\ge0}I^k$ is the Rees algebra of $I$,\medskip
	\item[-] $\mu(\p R_\p)=\dim_{R_\p/\p R_\p}(\p R_\p/\p^2 R_\p)$ is the minimal number of generators of $\p R_\p$,\medskip
	\item[-] and $H_{\mu(\p R_\p)-1}(\p R_\p\,;\,\mathcal{R}(I)_\p)$ is the $(\mu(\p R_\p)-1)$th Koszul homology module of $\p R_\p$ with respect to $\mathcal{R}(I)_\p$.
\end{enumerate}

It is noted in the proof of \cite[Theorem 2.1]{FSPackA} that
\begin{equation}\label{eq:KoszHom}
	H_{\mu(\p R_\p)-1}(\p R_\p\,;\,\mathcal{R}(I)_\p)\ =\ \bigoplus_{k\ge0}\frac{(I_\p^{k}:\p R_\p)}{I_\p^{k}}.
\end{equation}

In order to prove Theorem \ref{Thm:AssInfty} in the monomial case, we recall a few basic facts from (Krull) dimension theory and Hilbert functions. For the convenience of the reader we provide a proof for each of these facts.
\begin{Remark}\label{Rem:elementary1}
	Let $R=\bigoplus_{k\ge0}R_k$ be a Noetherian graded $K$-algebra, $K$ a field, and let $M=\bigoplus_{k\ge0}M_k$ be a non-zero finitely generated graded $R$-module. Then, the Krull dimension $\dim M$ is positive if and only if $M_k\ne0$ for infinitely many $k$.
	\begin{proof}
		Recall that the Hilbert series of $M$ is $\textup{Hilb}_M(t)=\sum_{k\ge0}\dim_K(M_k)t^k$. By \cite[Theorem 6.1.3]{HH2011} there exists a Laurent polynomial $Q_M(t)\in\ZZ[t,t^{-1}]$ such that $\textup{Hilb}_M(t)=Q_M(t)/(1-t)^d$, where $d=\dim M$. It follows that $d=0$ if and only if $\textup{Hilb}_M(t)=Q_M(t)$. Since $Q_M(t)$ has only finitely many summands, this is the case if and only if $\dim_K M_k=0$ for all $k\gg0$, if and only if $M_k=0$ for all $k\gg0$. Consequently, $\dim M>0$ if and only if $M_k\ne 0$ for infinitely many $k$.
	\end{proof}
\end{Remark}

For a graded module $M=\bigoplus_{k\ge0}M_k$ we denote by $M_{\ge h}=\bigoplus_{k\ge h}M_k$ a truncation. We say that two graded modules $M=\bigoplus_{k\ge0}M_k$ and $N=\bigoplus_{k\ge0}N_k$ are equal up to truncation if there exist integers $h$ and $\ell$ such that $M_{\ge h}=N_{\ge\ell}$.
\begin{Remark}\label{Rem:elementary2}
	Let $R=\bigoplus_{k\ge0}R_k$ be a Noetherian graded $K$-algebra, $K$ a field, and let $M=\bigoplus_{k\ge0}M_k$ and $N=\bigoplus_{k\ge0}N_k$ be two finitely generated graded $R$-modules. If $M$ and $N$ are equal up to truncation then $\dim M=\dim N$.
	\begin{proof}
		Let $h,\ell>0$ such that $M_{\ge h}=N_{\ge\ell}$. Then $\dim M_{\ge h}=\dim N_{\ge\ell}$. Now, if $\dim {M_{\ge h}}=0$ then $M_k=N_k=0$ for all $k\gg0$. This implies $\dim N=\dim M=0$. Suppose now that $\dim M_{\ge h}=d>0$. We claim that $\dim M_{\ge h}=\dim M$ and that $\dim N_{\ge\ell}=\dim N$. By \cite[Theorem 6.1.3.(b)]{HH2011} the Hilbert function $\textup{Hilb}(M_{\ge h},k)=\dim_K ({M_{\ge h}})_k$ agrees with a polynomial $P(x)\in\QQ[x]$ of degree $d-1$ for all $k\gg0$. Since $\textup{Hilb}(M,k)=\textup{Hilb}(M_{\ge h},k)$ for all $k\ge h$, it follows that $\textup{Hilb}(M,k)$ agrees with $P(x)$ for all $k\gg0$. Now \cite[Theorem 6.1.3.(b)]{HH2011} implies $\dim M=d$, as well.
	\end{proof}
\end{Remark}

\begin{Remark}\label{Rem:elementary3}
	Let $R=\bigoplus_{k\ge0}R_k$ be a Noetherian graded $K$-algebra, $K$ a field, and let $I=\bigoplus_{k\ge0}I_k$ and $J=\bigoplus_{k\ge0}J_k$ be two graded ideals of $R$. Then $\dim IJ>0$ if and only if $\dim I>0$ and $J$ is non-zero or $\dim J>0$ and $I$ is non-zero.
	\begin{proof}
	Notice that $IJ=\bigoplus_{k\ge0}(IJ)_k$ is a graded ideal with $k$th graded component $(IJ)_k=\sum_{i+j=k}I_iJ_j$. Remark \ref{Rem:elementary1} implies that $\dim IJ>0$ if and only if for all $k\gg0$ there exist $i$ and $j$ such that $i+j=k$, $I_i\ne0$ and $J_{j}\ne0$. From this the assertion follows immediately.
	\end{proof}
\end{Remark}

We are now ready to give the promised proof.
\begin{proof}[Proof of Theorem \ref{Thm:AssInfty}, monomial case]
	Let $I$ and $J$ be monomial ideals. The associated primes of a monomial ideal are monomial prime ideals, that is, ideals generated by a subset of the variables \cite[Corollary 1.3.9]{HH2011}. Thus, to characterize the set $\Ass^\infty(I+J)$ we only need to consider monomial prime ideals containing $I+J$. Let $\p=(x_{i_1},\dots,x_{i_r},y_{j_1},\dots,y_{j_t})$ be such an ideal. To compute the Koszul homology module (\ref{eq:KoszHom}) we may replace ordinary localization with monomial localization. Indeed, recall that for a monomial ideal $L\subset S$, the \textit{monomial localization} of $L$ is the monomial ideal $L(\p)$ of the polynomial ring $S(\p)=K[x_{i_1},\dots,x_{i_r},y_{j_1},\dots,y_{j_t}]$ obtained by applying the substitutions $x_i\mapsto 1$ and $y_j\mapsto1$ for $x_i\notin\p$ and $y_j\notin \p$. Since we have the equality $L(\p)S_\p=L S_\p$, then for all $k\ge0$,
	$$
	\frac{((I+J)_\p^{k}:\p S_\p)}{(I+J)_\p^{k}}\ =\ \frac{((I+J)^{k}:\p)}{(I+J)^{k}}S_\p\ =\ \frac{((I(\p)+J(\p))^{k}:\p)}{(I(\p)+J(\p))^{k}}S_\p.
	$$
	Hence
	$$
	H_{\mu(\p S_\p)-1}(\p S_\p\,;\,\mathcal{R}(I+J)_\p)\ =\ H_{\mu(\p)-1}(\p\,;\,\mathcal{R}(I(\p)+J(\p))\,)S_\p.
	$$
	Since $S_\p$ is again a polynomial ring and $I(\p)$ and $J(\p)$ are monomial ideals of $S_\p$, we may assume from the very beginning that $\p=\m=(x_1,\dots,x_n,y_1,\dots,y_n)$ is the maximal ideal of $S$. We set $\m_x=(x_1,\dots,x_n)$ and $\m_y=(y_1,\dots,y_m)$.\smallskip
	
	Let $\gr_I(R)=\bigoplus_{k\ge0}(I^k/I^{k+1})$ and $\gr_J(T)=\bigoplus_{k\ge0}(J^k/J^{k+1})$ be the associated graded rings of $I\subset R$ and $J\subset T$. By \cite[Proposition 3.2]{HTT16} the canonical map
	$$
	\varphi\ :\ \gr_{I}(R)\otimes_K\gr_J(T)\rightarrow\gr_{I+J}(S)= \bigoplus_{k\ge0}(I+J)^k/(I+J)^{k+1}
	$$
	defined by $\varphi(f\otimes g)=fg$ is an isomorphism.
	
	Let $C(I)=(0:_{\gr_I(R)}\m_x)$, $C(J)=(0:_{\gr_J(T)}\m_y)$ and $C(I+J)=(0:_{\gr_{I+J}(S)}\m)$. We claim that
	\begin{equation}\label{eq:prodC}
		C(I+J)\ =\ C(I)C(J).
	\end{equation}
	First notice that $C(I)\otimes_KC(J)$ is an ideal of $\gr_{I}(R)\otimes_K\gr_J(T)=\gr_{I+J}(S)$ for it is the tensor product of two ideals. It is clear that
	$$
	C(I)C(J)\ =\ \varphi(C(I)\otimes_K C(J))\ \subseteq\ C(I+J).
	$$
	
	To show the opposite inclusion, using the subsequent calculation (\ref{eq:C(I+J)k}), we notice that $C(I+J)$, $C(I)$ and $C(J)$ are generated by residue classes of monomials. Hence $\varphi$ is a multigraded isomorphism. Let $u+(I+J)^{k+1}\in C(I+J)_k$ be non-zero, where $u\in(I+J)^k$ is a monomial. Then there exist integers $p$ and $q$ and monomials $u_1\in I^p$ and $u_2\in J^q$ such that $p+q=k$ and $\varphi((u_1+I^{p+1})\otimes(u_2+J^{q+1}))=u+(I+J)^{k+1}$. Let $1\le i\le n$. Since $x_i(u+I^{k+1})=0$ and $\alpha$ is an isomorphism, it follows that
	\[
	(x_iu_1+I^{p+1})\tensor (u_2+J^{q+1})\ =\ (x_i\tensor 1)((u_1+I^{p+1})\tensor (u_2+J^{q+1}))\ =\ 0.
	\]
	Notice that $u_2+J^{q+1}\neq 0$, otherwise $(u_1+I^{p+1})\tensor(u_2+J^{q+1})=0$, and then $u+(I+J)^{k+1}=0$, a contradiction. Hence $(x_iu_1+I^{p+1})=0$. This shows that $u_1+I^{p+1}\in C(I)$. Likewise $u_2+J^{q+1}\in C(J)$ and equation (\ref{eq:prodC}) follows.
	
	We claim that $C(I+J)$ and $H_{\mu(\m)-1}(\m\,;\,\mathcal{R}(I+J))$ are equal up to truncation. For this aim, we notice that for all $k\ge0$,
	\begin{equation}\label{eq:C(I+J)k}
		C(I+J)_k\ =\ \frac{((I+J)^{k+1}:\m)\cap(I+J)^{k}}{(I+J)^{k+1}}.
	\end{equation}
	(For this computation see also \cite[Theorem 2.2, proof]{FS2}). By \cite[Corollary 4.2]{R1976} we have $(I+J)^{k+1}:(I+J)=(I+J)^k$ for all $k\gg0$. Hence $((I+J)^{k+1}:\m)\subseteq(I+J)^{k+1}:(I+J)=(I+J)^k$ and so
	$$
	C(I+J)_k\ =\ \frac{((I+J)^{k+1}:\m)}{(I+J)^{k+1}}\ =\ H_{\mu(\m)-1}(\m\,;\,\mathcal{R}(I+J))_{k+1}.
	$$
	for all $k\gg0$. Similarly, $C(I)$ is equal up to truncation to $H_{\mu(\m_x)-1}(\m_x\,;\,\mathcal{R}(I))$ and $C(J)$ is equal up to truncation to $H_{\mu(\m_y)-1}(\m_y\,;\,\mathcal{R}(J))$. Remark \ref{Rem:elementary2} now implies that $\dim H_{\mu(\m)-1}(\m\,;\,\mathcal{R}(I+J))=\dim C(I+J)$. 
	
	Theorem \ref{Thm:pinAss8(I)} guarantees that $\mm\in\Ass^\infty(I+J)$ if and only if $\dim C(I+J)>0$. Equation (\ref{eq:prodC}) and Remark \ref{Rem:elementary3} imply that $\dim C(I+J)>0$ if and only if $\dim C(I)>0$ and $C(J)\ne0$ or $\dim C(J)>0$ and $C(I)\ne0$. Applying again Theorem \ref{Thm:pinAss8(I)} and Remark \ref{Rem:elementary1}, we see that $\dim C(I)>0$ if and only if $\m_x\in\Ass^\infty(I)$ and $\dim C(J)>0$ if and only if $\m_y\in\Ass^\infty(J)$. Whereas, since $\m_x$ is the maximal ideal of $R$, the condition $C(I)\ne0$ is equivalent to $\m_x\in\Ass^*(I)$. Likewise $C(J)\ne0$ if and only if $\m_y\in\Ass^*(J)$. From these observations the theorem follows.
\end{proof}

\section{The $\v$-function of sums}\label{Sec:4-A-P}

The purpose of this section is to prove the following very useful formula.
\begin{Theorem}\label{Thm:FormulaV-Num-MonId}
	Let $I\subset R$, $J\subset T$ be proper monomial ideals, and let $\p\in\Spec(R)$ and $\q\in\Spec(T)$ be monomial prime ideals such that $\p+\q\in\Ass((I+J)^k)$. Then,
	$$
	\v_{\p+\q}((I+J)^k)\ =\ \min_{\substack{0\le\ell<k\\ \p\in\Ass(I^{k-\ell})\\ \q\in\Ass(J^{\ell+1})}}(\v_\p(I^{k-\ell})+\v_\q(J^{\ell+1})).
	$$
\end{Theorem}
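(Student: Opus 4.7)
The plan is to prove the two inequalities separately, working with monomial witnesses throughout: for monomial ideals and monomial primes the v-number is always attained by a monomial. I would first record two preliminary facts. \emph{(a) Key lemma.} If $f\in R$ is a monomial with $\p f\subseteq I^{a}$ and $f\notin I^{a}$, then $f\in I^{a-1}$. Picking any variable $x\in\p$, we write $xf=v_{1}\cdots v_{a}h$ with $v_{i}\in G(I)$ and $h$ a monomial; since $f\notin I^{a}$, the variable $x$ cannot divide $h$, so $x\mid v_{i}$ for some $i$, say $v_{1}=xv_{1}'$, and then $f=v_{1}'v_{2}\cdots v_{a}h$ is divisible by $v_{2}\cdots v_{a}\in I^{a-1}$. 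The symmetric statement holds for $J$. \emph{(b) Colon formula.} Since $I$ and $J$ are in disjoint variables, $(I+J)^{k}=\sum_{\ell=0}^{k}I^{k-\ell}J^{\ell}$, and a split monomial $uv$ ($u\in R$, $v\in T$) lies in $I^{k-\ell}J^{\ell}$ if and only if $u\in I^{k-\ell}$ and $v\in J^{\ell}$. For monomials $f\in R$, $g\in T$ this yields
\[
((I+J)^{k}:fg)\ =\ \sum_{\ell'=0}^{k}(I^{k-\ell'}:f)\,(J^{\ell'}:g).
\]

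For the upper bound, fix an admissible $\ell$ in the right-hand minimum and take monomial witnesses $f$, $g$ of degrees $\v_{\p}(I^{k-\ell})$, $\v_{\q}(J^{\ell+1})$. By (a), $f\in I^{k-\ell-1}$ and $g\in J^{\ell}$, so in (b) the $\ell'=\ell$ summand collapses to $\p\cdot T=\p$ and the $\ell'=\ell+1$ summand to $R\cdot\q=\q$; every other summand lies in $\p$ (for $\ell'<\ell$, via $(I^{k-\ell'}:f)\subseteq(I^{k-\ell}:f)=\p$) or in $\q$ (for $\ell'>\ell$, via $(J^{\ell'}:g)\subseteq(J^{\ell+1}:g)=\q$). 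Hence $((I+J)^{k}:fg)=\p+\q$ and $\v_{\p+\q}((I+J)^{k})\le\deg f+\deg g$.

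For the lower bound, take an optimal monomial witness $h=fg$ and set $A=\{\ell':(I^{k-\ell'}:f)\subseteq\p\}$ and $B=\{\ell':(J^{\ell'}:g)\subseteq\q\}$. Each summand of (b) is contained in $\p+\q$ and is generated by split $x$-$y$ monomials, which can only belong to $\p+\q$ if they belong to $\p$ or to $\q$; this forces $\ell'\in A\cup B$ for every $\ell'$, and by monotonicity $A$ is downward closed and $B$ is upward closed. I would set $\ell:=\max A$, introduce the auxiliary indices $\ell_{0}=\max\{\ell':g\in J^{\ell'}\}$ and $j_{0}=\max\{j:f\in I^{j}\}$, use $fg\notin(I+J)^{k}$ (since $\p+\q\ne S$) to deduce $\ell_{0}+j_{0}\le k-1$, and establish the anchor equalities $(I^{k-\ell_{0}}:f)=\p$ and $(J^{k-j_{0}}:g)=\q$ by extracting the contribution of each variable $x\in\p$ and $y\in\q$ to the sum. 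Together with the sandwich $\ell_{0}\le\ell\le k-j_{0}-1$, the monotone chains of colon ideals then upgrade these anchors to the required equalities $(I^{k-\ell}:f)=\p$ and $(J^{\ell+1}:g)=\q$, yielding $\v_{\p}(I^{k-\ell})+\v_{\q}(J^{\ell+1})\le\deg f+\deg g$.

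The main technical obstacle is precisely this last combinatorial step, in which a single index $\ell$ must simultaneously realize both equalities. The key inputs are the disjointness of $x$- and $y$-variables (so that $\p+\q$ is a split ideal whose monomial generators decompose into a $\p$-part and a $\q$-part) and the incompatibility $fg\notin(I+J)^{k}$, which bounds the anchors $\ell_{0}$ and $j_{0}$ and keeps $\ell=\max A$ inside the admissible range $0\le\ell<k$.
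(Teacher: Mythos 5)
Your proposal is correct and follows essentially the same route as the paper: both inequalities rest on the decomposition $((I+J)^k:fg)=\sum_{\ell'}(I^{k-\ell'}:f)(J^{\ell'}:g)$, the splitting of a monomial witness into its $x$- and $y$-parts (so that an $x$-monomial ideal contained in $\p+\q$ must lie in $\p$), and the lemma that a monomial $f$ with $\p f\subseteq I^{a}$ already lies in $I^{a-1}$. The only difference is bookkeeping in the lower bound, where you choose $\ell=\max A$ and pin down $(I^{k-\ell}:f)=\p$, $(J^{\ell+1}:g)=\q$ via the anchors $\ell_0$, $j_0$, while the paper takes $\ell=\max_i\ell_i$ over indices with $x_if\in I^{k-\ell_i}J^{\ell_i}$ and collapses the sum directly; the underlying ideas coincide.
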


Before we prove the theorem we recall few basic facts about monomial ideals.\smallskip

Let $I\subset R=K[x_1,\dots,x_n]$ be a proper monomial ideal. As is customary, we denote by $\mathcal{G}(I)$ the minimal monomial generating set of $I$. Let $v\in R$ be a monomial. By \cite[Proposition 1.2.2]{HH2011}, a set of generators of $(I:v)$ is
\begin{equation}\label{eq:gensColon}
	\Big\{ \frac{\lcm(u,v)}{v}\ :\ u\in\mathcal{G}(I) \Big\}.
\end{equation}
From this fact it follows immediately that $(\sum_{j=1}^tI_t):v=\sum_{j=1}^t(I_j:v)$, for any proper monomial ideals $I_1,\dots,I_t\subset R$.

For vectors ${\bf a}=(a_1,\dots,a_n)\in\ZZ_{\ge0}^n$ and ${\bf b}=(b_1,\dots,b_m)\in\ZZ_{\ge0}^m$ we set ${\bf x^a}=\prod_ix_i^{a_i}$ and ${\bf y^b}=\prod_iy_i^{b_i}$. In particular, ${\bf x^0}={\bf y^0}=1$ for ${\bf 0}=(0,0,\dots,0)$.\medskip

Let $I\subset R$ and $J\subset T$ be proper monomial ideals. We claim that
\begin{equation}\label{eq:colonSepX-Y}
	(IJ:{\bf x^a}{\bf y^b})\ =\ (I:{\bf x^a})(J:{\bf y^b}).
\end{equation}
Indeed, $\mathcal{G}(IJ)=\{uv:u\in\mathcal{G}(I),v\in\mathcal{G}(J)\}$. Then, by equation (\ref{eq:gensColon}) any generator of the colon is of the form $\lcm(uv,{\bf x^ay^b})/({\bf x^ay^b})$, for $u\in\mathcal{G}(I)$ and $v\in\mathcal{G}(J)$. It is immediate to see that
$$
\frac{\lcm(uv,{\bf x^a}{\bf y^b})}{{\bf x^ay^b}}\ =\ \frac{\lcm(u,{\bf x^a})\lcm(v,{\bf y^b})}{{\bf x^ay^b}}\ =\ \frac{\lcm(u,{\bf x^a})}{{\bf x^a}}\cdot\frac{\lcm(v,{\bf y^b})}{{\bf y^b}}.
$$
From this fact equation (\ref{eq:colonSepX-Y}) follows immediately.

\begin{proof}[Proof of Theorem \ref{Thm:FormulaV-Num-MonId}]
	Let $\p\in\Spec(R)$ and $\q\in\Spec(T)$ be monomial prime ideals such that $\p+\q\in\Ass((I+J)^k)$. Up to a suitable relabeling, we may assume that $\p=(x_1,\dots,x_p)$ for some $1\le p\le n$ and $\q=(y_1,\dots,y_q)$ for some $1\le q\le m$.
	
	Let $f={\bf x^a}{\bf y^b}\in S$ be a monomial such that $(I+J)^k:f=\p+\q$ and with $\deg(f)=\v_{\p+\q}((I+J)^k)$. Note that
	\begin{equation}\label{eq:DecompColon}
		\begin{aligned}
			(I+J)^k:f\ &=\ (\sum_{h=0}^kI^{k-h}J^{h}):{\bf x^a}{\bf y^b}\ =\ \sum_{h=0}^k(I^{k-h}J^{h}:{\bf x^a}{\bf y^b})\\
			&=\ \phantom{(\!}\sum_{h=0}^k(I^{k-h}:{\bf x^a})(J^{h}:{\bf y^b}).
		\end{aligned}
	\end{equation}

	Therefore, for each $1\le i\le p$ there exists $0\le\ell_i\le k$ such that $x_if\in I^{k-\ell_i}J^{\ell_i}$. Let $\ell=\max\{\ell_1,\dots,\ell_p\}$. Then $x_i{\bf x^a}\in I^{k-\ell}$ for all $1\le i\le p$ and ${\bf y^b}\in J^\ell$. Thus $x_if\in I^{k-\ell}J^{\ell}$ for all $1\le i\le p$. Notice that $\ell<k$. Otherwise if $\ell=k$, since ${\bf y^b}\in J^k$ then $f={\bf x^ay^b}\in J^k\subset(I+J)^k$ and $(I+J)^k:f=S$ against the assumption.
	
	Since ${\bf y^b}\in J^\ell$, the $\ell$th summand $(I^{k-\ell}:{\bf x^a})(J^\ell:{\bf y^b})$ in (\ref{eq:DecompColon}) is equal to $(I^{k-\ell}:{\bf x^a})$ and by our discussion contains $\p$. Notice that $(I^{k-\ell}:{\bf x^a})$ is a monomial ideal of $R$. Therefore, $(I^{k-\ell}:{\bf x^a})$ must be equal to $\p$, for otherwise $(I+J)^k:f$ would strictly contain $\p+\q$, against the assumption. Hence $(I^{k-\ell}:{\bf x^a})=\p$ and so
	\begin{equation}\label{eq:IneqV-Num1}
		\deg({\bf x^a})\ \ge\ \v_\p(I^{k-\ell}).
	\end{equation}

    Since $x_1{\bf x^a}\in I^{k-\ell}$, there exist $k-\ell$ monomials $u_1,u_2\dots,u_{k-\ell}\in I$ such that $x_1{\bf x^a}=u_1\cdots u_{k-\ell}$. Consequently, $x_1$ divides $u_j$ for some $1\le j\le k-\ell$. Up to relabeling we may assume $j=k-\ell$. We deduce that $u_1\cdots u_{k-\ell-1}$ divides ${\bf x^a}$. Hence ${\bf x^a}\in I^{k-(\ell+1)}$ and the $(\ell+1)$th summand in (\ref{eq:DecompColon}) becomes
    $$
    (I^{k-(\ell+1)}:{\bf x^a})(J^{\ell+1}:{\bf y^b})\ =\ (J^{\ell+1}:{\bf y^b}).
    $$
    Since ${\bf x^a}\in I^{k-(\ell+1)}$, then $(I^{k-h}:{\bf x^a})=S$ for all $\ell+1\le h\le k$. Similarly, since ${\bf y^b}\in J^\ell$, then $(J^h:{\bf y^b})=S$ for all $1\le h\le\ell$. Thus, equation (\ref{eq:DecompColon}) simplifies to
    $$
    (I+J)^k:f\ =\ \sum_{h=0}^{\ell}(I^{k-h}:{\bf x^a})+\sum_{h=\ell+1}^k(J^h:{\bf y^b}).
    $$
    
    Noticing that $(I^k:{\bf x^a})\subseteq\cdots\subseteq(I^{k-\ell}:{\bf x^a})$ and $(J^{\ell+1}:{\bf y^b})\supseteq\cdots\supseteq(J^k:{\bf y^b})$, the previous formula simplifies to
    $$
    (I+J)^k:{\bf x^ay^b}\ =\ (I^{k-\ell}:{\bf x^a})+(J^{\ell+1}:{\bf y^b}).
    $$
    
    Since $(I+J)^k:{\bf x^ay^b}=\p+\q$ by assumption and $(I^{k-\ell}:{\bf x^a})=\p$ we conclude that $(J^{\ell+1}:{\bf y^b})=\q$. Hence
    \begin{equation}\label{eq:IneqV-Num2}
    	\deg({\bf y^b})\ \ge\ \v_\q(J^{\ell+1}).
    \end{equation}

    Putting together (\ref{eq:IneqV-Num1}) and (\ref{eq:IneqV-Num2}), we deduce that
    $$
    \v_{\p+\q}((I+J)^k)\ =\ \deg(f)\ =\ \deg({\bf x^a})+\deg({\bf y^b})\ \ge\ \v_\p(I^{k-\ell})+\v_\q(J^{\ell+1}),
    $$
    where $0\le\ell<k$. Hence
    $$
    \v_{\p+\q}((I+J)^k)\ \ge\ \min_{\substack{0\le\ell<k\\ \p\in\Ass(I^{k-\ell})\\ \q\in\Ass(J^{\ell+1})}}(\v_\p(I^{k-\ell})+\v_\q(J^{\ell+1})).
    $$
    
    To show the opposite inequality, let $0\le\ell<k$ such that $\p\in\Ass(I^{k-\ell})$ and $\q\in\Ass(J^{\ell+1})$. Let ${\bf x^a},{\bf y^b}\in S$ be monomials such that $(I^{k-\ell}:{\bf x^a})=\p$, $(J^{\ell+1}:{\bf y^b})=\q$, $\deg({\bf x^a})=\v_\p(I^{k-\ell})$ and $\deg({\bf y^b})=\v_\q(J^{\ell+1})$.
    
    Set $f={\bf x^a}{\bf y^b}$. We claim that $(I+J)^k:f=\p+\q$. This implies the desired inequality and concludes the proof.
    
    Arguing as in the previous part of the proof, we see that ${\bf x^a}\in I^{k-(\ell+1)}$ and ${\bf y^b}\in J^\ell$. Hence $(I^{k-h}:{\bf x^a})=S$ for all $\ell+1\le h\le k$, and $(J^h:{\bf y^b})=S$ for all $1\le h\le\ell$. Then equation (\ref{eq:DecompColon}) which is valid in our setting, simplifies to
    $$
    (I+J)^k:f\ =\ \sum_{h=0}^{\ell}(I^{k-h}:{\bf x^a})+\sum_{h=\ell+1}^k(J^h:{\bf y^b}).
    $$
    Since $(I^k:{\bf x^a})\subseteq\cdots\subseteq(I^{k-\ell}:{\bf x^a})=\p$ and $\q=(J^{\ell+1}:{\bf y^b})\supseteq\cdots\supseteq(J^k:{\bf y^b})$, the previous equation becomes $(I+J)^k:f=\p+\q$, as desired.
\end{proof}

\begin{Corollary}
	Let $I\subset R$, $J\subset T$ be proper monomial ideals. Then,
	$$
	\v((I+J)^k)\ =\ \min_{\substack{\p\in\Spec(R)\\\q\in\Spec(T)\\ \p+\q\in\Ass((I+J)^k)}}(\min_{\substack{0\le\ell<k\\ \p\in\Ass(I^{k-\ell})\\ \q\in\Ass(J^{\ell+1})}}(\v_\p(I^{k-\ell})+\v_\q(J^{\ell+1}))).
	$$
\end{Corollary}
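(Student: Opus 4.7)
The plan is to reduce the corollary directly to Theorem \ref{Thm:FormulaV-Num-MonId}, by unpacking the definition of the global $\v$-number and observing that the associated primes of the monomial ideal $(I+J)^k$ admit a canonical decomposition into a sum of a monomial prime of $R$ and a monomial prime of $T$.

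First I would invoke the definition
$$
\v((I+J)^k)\ =\ \min_{\mathfrak{P}\in\Ass((I+J)^k)}\v_{\mathfrak{P}}((I+J)^k),
$$
so the task reduces to parametrizing $\Ass((I+J)^k)$ by pairs $(\p,\q)$ of monomial primes of $R$ and $T$ and then substituting each local $\v$-number.

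Next I would exploit that $I+J$ is a monomial ideal of $S$, so by \cite[Corollary 1.3.9]{HH2011} every $\mathfrak{P}\in\Ass((I+J)^k)$ is a monomial prime of $S$. Any such $\mathfrak{P}$ decomposes uniquely as $\mathfrak{P}=\p+\q$, with $\p\subset R$ generated by the $x$-variables appearing in $\mathfrak{P}$ and $\q\subset T$ by the $y$-variables. A quick verification shows both $\p$ and $\q$ are nonzero: since $(I+J)^k\supseteq I^k$ and $(I+J)^k\supseteq J^k$, and the minimal generators of $I^k$ and $J^k$ are monomials purely in the $x_i$ and the $y_j$ respectively, the containment $(I+J)^k\subseteq\mathfrak{P}$ forces at least one $x_i$ and one $y_j$ into $\mathfrak{P}$. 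Thus the outer minimum in the corollary ranges exactly over $\Ass((I+J)^k)$ via the correspondence $\mathfrak{P}\leftrightarrow(\p,\q)$.

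Finally I would apply Theorem \ref{Thm:FormulaV-Num-MonId} to each associated prime $\p+\q$ to replace $\v_{\p+\q}((I+J)^k)$ by the inner minimum from the statement of that theorem, and then collapse the two nested minima into the formula claimed by the corollary. The only point meriting care is verifying that both components $\p$ and $\q$ in the decomposition are genuine (nonzero) monomial primes so that Theorem \ref{Thm:FormulaV-Num-MonId} applies verbatim; beyond that, the argument is a direct unpacking of definitions and no substantial obstacle is anticipated.
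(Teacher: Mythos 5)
Your argument is correct and is essentially the one the paper intends: the corollary is stated without proof as an immediate consequence of Theorem \ref{Thm:FormulaV-Num-MonId}, obtained exactly as you do by writing $\v((I+J)^k)=\min_{\mathfrak{P}\in\Ass((I+J)^k)}\v_{\mathfrak{P}}((I+J)^k)$, using \cite[Corollary 1.3.9]{HH2011} to decompose each associated monomial prime as $\p+\q$, and substituting the local formula. Your extra check that both $\p$ and $\q$ are nonzero (since any prime containing $(I+J)^k$ must contain some $x_i$ and some $y_j$) is a worthwhile detail that the paper leaves implicit.
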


Due to some experimental evidence, we could expect that the formula given in Theorem \ref{Thm:FormulaV-Num-MonId} holds true for any graded ideals $I\subset R$ and $J\subset T$.\medskip

In \cite{FSPackA} the following question was posed.
\begin{Question}\label{Ques:v-reg}
	\textup{\cite[Question 5.1]{FSPackA}} Let $I\subset S$ be an homogeneous ideal. Is it true that the inequality $\v(I^k)<\reg(I^k)$ holds for all $k\gg0$?
\end{Question}
Some partial results supporting this expectation are provided in \cite{BMS24,FS2,VS24}.\smallskip

The \textit{support} of a monomial $u\in S$ is defined as the set $\supp(u)=\{x_i:x_i\ \textup{divides}\ u\}$. For a proper monomial ideal $I\subset S$, we define the \textit{support} of $I$ to be the set $\supp(I)=\bigcup_{u\in\mathcal{G}(I)}\supp(u)$.

We say that a monomial ideal $I\subset S$ is \textit{connected} if for any $u,v\in\mathcal{G}(I)$ there exist $w_0,w_1,\dots,w_r\in \mathcal{G}(I)$ such that $w_0=u$, $w_r=v$ and $\supp(w_i)\cap\supp(w_{i+1})\ne\emptyset$ for all $0\le i\le r-1$. It is clear that any monomial ideal $I\subset S$ can be written uniquely as $I=I_1+\dots+I_t$, where each $I_j$ is a connected monomial ideal, and $\mathcal{G}(I)$ is the disjoint union $\bigcup_{j=1}^t\mathcal{G}(I_j)$. Notice in particular that $I_1,\dots,I_t$ have pairwise disjoint supports.

In the next corollary, we show that to prove Question \ref{Ques:v-reg} for all monomial ideals, it is enough to consider only connected monomial ideals.

\begin{Corollary}\label{Cor:regIneq}
	Let $I\subset R$ and $J\subset T$ be proper monomial ideals. Suppose that $\v(I^k)<\reg(I^k)$ and $\v(J^k)<\reg(J^k)$ for all $k\gg0$. Then $\v((I+J)^k)<\reg((I+J)^k)$ for all $k\gg0$.
\end{Corollary}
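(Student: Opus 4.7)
The plan is to mirror the product-case argument of Corollary \ref{Cor:v-reg-prod}: combine Theorem \ref{Thm:FormulaV-Num-MonId} (which gives an upper bound for $\v((I+J)^k)$ in terms of local $\v$-numbers of smaller powers of $I$ and $J$) with a companion lower bound for $\reg((I+J)^k)$ in terms of $\reg$ of smaller powers. The required regularity bound is the sum-analog of \cite[Corollary 3.2]{HRR} used in the previous corollary, namely the H\`a--Nguyen--Trung--Trung formula for regularity of powers of sums in pairwise disjoint variables.

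For $k\gg0$, fix $\ell$ with $0\le\ell<k$ such that both $a:=k-\ell$ and $b:=\ell+1$ grow to infinity with $k$ (for instance $\ell=\lfloor k/2\rfloor$). Then $\Ass(I^a)=\Ass^\infty(I)$, $\Ass(J^b)=\Ass^\infty(J)$, and the hypotheses give
$$
\v(I^a)+1\ \le\ \reg(I^a),\qquad \v(J^b)+1\ \le\ \reg(J^b).
$$
Choose monomial primes $\p\in\Ass^\infty(I)$ and $\q\in\Ass^\infty(J)$ realizing $\v_\p(I^a)=\v(I^a)$ and $\v_\q(J^b)=\v(J^b)$. By Theorem \ref{Thm:AssInfty}, $\p+\q\in\Ass^\infty(I+J)$, so $\p+\q\in\Ass((I+J)^k)$ for $k$ large, and Theorem \ref{Thm:FormulaV-Num-MonId}, applied to this prime with the chosen $\ell$, yields
$$
\v((I+J)^k)\ \le\ \v_{\p+\q}((I+J)^k)\ \le\ \v_\p(I^a)+\v_\q(J^b)\ =\ \v(I^a)+\v(J^b).
$$

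On the regularity side, I would invoke the H\`a--Nguyen--Trung--Trung lower bound
$$
\reg((I+J)^k)\ \ge\ \reg(I^a)+\reg(J^b)-1,
$$
valid for every $a,b\ge1$ with $a+b=k+1$ when $I$ and $J$ live in disjoint sets of variables. Combining this with the two displayed inequalities gives
$$
\reg((I+J)^k)\ \ge\ \reg(I^a)+\reg(J^b)-1\ \ge\ \v(I^a)+\v(J^b)+1\ >\ \v((I+J)^k),
$$
which is the desired strict inequality for all $k\gg0$. The main delicate point is invoking the correct regularity lower bound for sums of powers of disjoint-variable ideals; once that is in hand, everything else is bookkeeping on the choice of $\ell$ (so that both $I^a$ and $J^b$ are in the asymptotic regime where $\v<\reg$) and on verifying, via Theorem \ref{Thm:AssInfty}, that the primes realizing the $\v$-numbers of $I^a$ and $J^b$ do assemble into an element of $\Ass((I+J)^k)$.
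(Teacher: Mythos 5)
Your proof is correct and is essentially the paper's own argument: for a split of $k$ with both $k-\ell$ and $\ell+1$ large, bound $\v((I+J)^k)\le \v(I^{k-\ell})+\v(J^{\ell+1})$ via Theorem \ref{Thm:FormulaV-Num-MonId} (choosing primes realizing the two $\v$-numbers, which automatically assemble into an associated prime of $(I+J)^k$), and compare with $\reg((I+J)^k)\ge \reg(I^{k-\ell})+\reg(J^{\ell+1})-1$. The only minor difference is the provenance of that regularity bound: rather than quoting it wholesale (it belongs to H\`a--Trung--Trung \cite{HTT16}, not to the symbolic-powers paper \cite{HNTT20}), the paper derives the asymptotic case---which is all you need---from the formula $\reg (I+J)^{k-1}/(I+J)^k=\max_{0\le\ell<k}\bigl\{\reg I^{k-\ell-1}/I^{k-\ell}+\reg J^{\ell}/J^{\ell+1}\bigr\}$ of \cite[Theorem 3.3]{HTT16} together with $\reg L^{p-1}/L^{p}=\reg L^{p}-1$ for $p\gg0$ from \cite[Lemma 5.1]{HTT16}.
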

\begin{proof}
	Applying \cite[Theorem 3.3]{HTT16} for all $k\ge1$ we have
	$$
	\reg\frac{(I+J)^{k-1}}{(I+J)^k}\ =\ \max_{i+j=k-1}\big\{\!\reg\frac{I^{i}}{I^{i+1}}+\reg\frac{J^{j}}{J^{j+1}}\big\}.
	$$
	Setting $j=\ell$, we have $0\le\ell<k$ and $i+1=k-\ell$, and the formula becomes
	$$
	\reg\frac{(I+J)^{k-1}}{(I+J)^k}\ =\ \max_{0\le\ell<k}\big\{\!\reg\frac{I^{k-\ell-1}}{I^{k-\ell}}+\reg\frac{J^{\ell}}{J^{\ell+1}}\big\}.
	$$
	By \cite[Lemma 5.1]{HTT16}, we have $\reg L^{p-1}/L^{p}=\reg L^{p}-1$, for any homogeneous ideal $L\subset S$ and all $p\gg0$. Thus, for any $k\gg0$ we can find $0\le\ell<k$ such that $k-\ell$ is also large enough and such that
	\begin{enumerate}
		\item[(a)] $\reg((I+J)^k)\ge\reg(I^{k-\ell})+\reg(J^{\ell+1})-1$,
		\item[(b)] $\v(I^{k-\ell})<\reg(I^{k-\ell})$, and
		\item[(c)] $\v(J^{\ell+1})<\reg(J^{\ell+1})$.
	\end{enumerate}
    There exist $\p\in\Ass(I^{k-\ell})$ and $\q\in\Ass(J^{\ell+1})$ such that $\v(I^{k-\ell})=\v_\p(I^{k-\ell})$ and $\v(J^{\ell+1})=\v_\q(J^{\ell+1})$. Applying Theorem \ref{Thm:FormulaV-Num-MonId} and (a), (b) and (c) we conclude that
	\begin{align*}
		\reg((I+J)^k)\ &\ge\ \reg(I^{k-\ell})+\reg(J^{\ell+1})-1\ >\ \v(I^{k-\ell})+\reg(J^{\ell+1})-1\\
		&\ge\ \v(I^{k-\ell})+\v(J^{\ell+1})\ =\ \v_\p(I^{k-\ell})+\v_\q(J^{\ell+1})\\
		&\ge\ \v_{\p+\q}((I+J)^k)\ \ge\ \v((I+J)^k),
	\end{align*}
    as desired.
\end{proof}

\section{Consequences}\label{Sec:5-A-P}

In this section we record several consequences of Theorem \ref{Thm:FormulaV-Num-MonId}.

Let $I\subset S$ be a graded ideal and let $\p\in\Ass^\infty(I)$. Let $f(k)=ak+b$ be the (unique) linear function such that $\v(I^k)=f(k)$ for all $k\gg0$. The \textit{$\v$-stability index} of $I$, denoted by $\vstab(I)$, is defined as the smallest $k_0$ such that $\v(I^k)=f(k)$ for all $k\ge k_0$. Likewise, let $g(k)=ck+d$ be the (unique) linear function such that $\v_\p(I^k)=g(k)$ for all $k\gg0$. The \textit{$\v_\p$-stability index} of $I$, denoted by $\v_\p\text{-stab}(I)$, is defined as the smallest $k_0$ such that $\v_\p(I^k)=g(k)$ for all $k\ge k_0$.

\begin{Example}\label{ex:Pedro}
	Let ${f \in S}$ be a homogeneous polynomial and consider the principal ideal ${I=(f)}$. If ${f=f_1^{\,a_1} \cdots f_s^{\,a_s}}$ is the decomposition of $f$ into irreducible polynomials, with ${\deg f_1 \le \cdots \le \deg f_s}$, then ${\Ass^\infty(I)=\Ass(I)=\{(f_1), \ldots, (f_s)\}}$, and we have that for any ${k>0}$, and any ${r \in \{1,\ldots,s\}}$, ${(I^k:(f^k/f_r))=(f_r)}$. So the $\v$-number of $I^k$ is attained by ${(I^k:(f^k/f_s))=(f_s)}$, and we have
	\[
	\v(I^k)=k \deg f - \deg f_s = \alpha(I)k- \deg f_s.
	\]
	In particular, the $\v$-stability index of $I$ is $1$.
\end{Example}

\begin{Theorem}\label{Thm:DisjointSupports}
	Let $I_1,\dots,I_t\subset S=K[x_1,\dots,x_n]$ be monomial ideals with pairwise disjoint supports. Suppose that $\v(I_j^k)=\alpha(I_j)k-1$ for all $k\gg0$ and all $j=1,\dots,t$. Then, for all $k\ge1$,
	$$
	\v((\sum_{j=1}^tI_j)^k)\ \ge\ (\min_{1\le j\le t}\alpha(I_j))k+(\sum_{j=1}^t\alpha(I_j)-\min_{1\le j\le t}\alpha(I_j)-t).
	$$
	Equality holds for all $k\gg0$ if $\alpha(I_1)=\alpha(I_2)=\cdots=\alpha(I_t)$ or for all $k\ge1$ if $\vstab(I_1)=\vstab(I_2)=\cdots=\vstab(I_t)=1$.
\end{Theorem}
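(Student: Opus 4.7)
The plan is to iterate Theorem~\ref{Thm:FormulaV-Num-MonId} over the $t$ summands. Writing $L_s=I_1+\cdots+I_s$ and applying the theorem to $L_s=L_{s-1}+I_s$ for $s=2,\ldots,t$ (legitimate because the $I_j$ have pairwise disjoint supports), I would obtain, for every monomial prime $\p=\p_1+\cdots+\p_t\in\Ass(L_t^k)$, an iterated identity
$$
\v_\p(L_t^k)\ =\ \min\Big(\sum_{j=1}^{t}\v_{\p_j}(I_j^{k_j})\Big),
$$
where the minimum is taken over tuples $(k_1,\ldots,k_t)\in\ZZ_{>0}^t$ with $k_1+\cdots+k_t=k+t-1$ and $\p_j\in\Ass(I_j^{k_j})$ for every $j$. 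Each invocation of Theorem~\ref{Thm:FormulaV-Num-MonId} contributes one extra unit of slack (since $(k-\ell)+(\ell+1)=k+1$), accumulating to the shift $+(t-1)$ after $t-1$ iterations.

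For the lower bound I would combine this identity with the universal monomial inequality $\v_{\p_j}(I_j^{k_j})\ge\v(I_j^{k_j})\ge\alpha(I_j)k_j-1$ from \cite[Proposition~2.2]{F2023}, so that
$$
\v_\p(L_t^k)\ \ge\ \min_{\substack{k_j\ge 1\\ k_1+\cdots+k_t=k+t-1}}\sum_{j=1}^t\alpha(I_j)k_j\ -\ t.
$$
The linear program on the right is minimized by concentrating the slack on an index $j_0$ with $\alpha(I_{j_0})=\min_j\alpha(I_j)$, namely $k_{j_0}=k$ and $k_j=1$ otherwise, which produces exactly the claimed constant $\sum_j\alpha(I_j)-\min_j\alpha(I_j)-t$. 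Since the estimate is independent of $\p$, it passes to $\v(L_t^k)$.

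For the equality clause I would realize an optimal tuple. When all $\alpha(I_j)$ coincide with some $\alpha$, every partition $(k_1,\ldots,k_t)$ with sum $k+t-1$ and $k_j\ge\vstab(I_j)$ for each $j$ makes $\sum_j\v(I_j^{k_j})=\alpha(k+t-1)-t$ match the lower bound; such partitions exist once $k\gg 0$. When $\vstab(I_j)=1$ for every $j$ the identity $\v(I_j^{k_j})=\alpha(I_j)k_j-1$ holds for all $k_j\ge 1$, so the minimizing tuple $(k,1,\ldots,1)$ works already at $k=1$. In either case, choosing primes $\p_j\in\Ass(I_j^{k_j})$ realizing $\v(I_j^{k_j})$ and monomial witnesses $\mathbf{x}^{\mathbf{a}_j}$ in the variables of $\supp(I_j)$ with $(I_j^{k_j}:\mathbf{x}^{\mathbf{a}_j})=\p_j$ yields, through the constructive half of the proof of Theorem~\ref{Thm:FormulaV-Num-MonId}, an associated prime $\p_1+\cdots+\p_t\in\Ass(L_t^k)$ attaining the bound.

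The main technical hurdle will be justifying the iterated identity and, above all, verifying that the monomial $f=\prod_j\mathbf{x}^{\mathbf{a}_j}$ built from the optimal tuple satisfies $(L_t^k:f)=\p_1+\cdots+\p_t$. Disjointness of supports and the factorization $(\prod_j I_j^{h_j}:f)=\prod_j(I_j^{h_j}:\mathbf{x}^{\mathbf{a}_j})$ reduce this to a monomial computation: the argument in the proof of Theorem~\ref{Thm:FormulaV-Num-MonId} gives $\mathbf{x}^{\mathbf{a}_j}\in I_j^{k_j-1}$, hence the summand corresponding to $h_{j_0}=k_{j_0}$ and $h_j=k_j-1$ for $j\ne j_0$ contributes exactly $\p_{j_0}$, yielding $\sum_j\p_j\subseteq(L_t^k:f)$; every other summand is absorbed into some $\p_{j_0}$, giving the reverse inclusion.
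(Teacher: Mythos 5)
Your proposal is correct, and its skeleton coincides with the paper's: both arguments rest on Theorem~\ref{Thm:FormulaV-Num-MonId} together with the universal bound $\v(I_j^{k_j})\ge\alpha(I_j)k_j-1$ from \cite{F2023}. The differences are in packaging and in one tool. You iterate the two-ideal formula into a single closed identity $\v_P(L_t^k)=\min\sum_j\v_{\p_j}(I_j^{k_j})$ over tuples with $\sum_j k_j=k+t-1$ and then solve the resulting integer program (slack concentrated on an index of minimal initial degree), whereas the paper runs an induction on $t$, splitting off $I_t$ at each step; these are the same computation in a different order. To make the iteration airtight you should record that every $P\in\Ass(L_t^k)$ decomposes uniquely as $\p_1+\cdots+\p_t$ with each $\p_j$ a nonzero monomial prime in the variables of $\supp(I_j)$ (each generator of $I_j$ lies in $P$, and no variable outside $\bigcup_j\supp(I_j)$ occurs in an associated prime of a monomial ideal supported there), so that Theorem~\ref{Thm:FormulaV-Num-MonId} applies at every stage. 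For the equality cases the paper certifies $P\in\Ass((I+J)^k)$ via Theorem~\ref{Thm:AssInfty} of Nguyen--Tran, which is immediate only for $k\gg0$, while you re-derive the membership from the constructive half of the proof of Theorem~\ref{Thm:FormulaV-Num-MonId}: with witnesses $(I_j^{k_j}:\mathbf{x}^{\mathbf{a}_j})=\p_j$ and $\mathbf{x}^{\mathbf{a}_j}\in I_j^{k_j-1}$, the exponent tuple $h_{j_0}=k_{j_0}$, $h_j=k_j-1$ for $j\ne j_0$ gives $\p_{j_0}\subseteq(L_t^k:f)$, and conversely any tuple with $\sum_j h_j=k$ has $h_{j_0}\ge k_{j_0}$ for some $j_0$ by pigeonhole, so its summand is contained in $\p_{j_0}$; hence $(L_t^k:f)=\p_1+\cdots+\p_t$. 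This explicit witness is a genuine, and welcome, variation: it works uniformly for all $k\ge1$, which is exactly what the $\vstab=1$ clause needs, whereas the paper's appeal to $\Ass^\infty$ leaves a small step to fill there. Your choices of optimal tuples (all $k_j$ beyond the stability range when the $\alpha(I_j)$ coincide, and $(k,1,\dots,1)$ with $k$ placed on an index of minimal $\alpha$ when all $\vstab(I_j)=1$) reproduce the claimed constants, so the proposal goes through once the two flagged verifications are written out as above.
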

\begin{proof}
	Up to relabeling, we may assume that $\alpha(I_1)\le\alpha(I_2)\le\cdots\le\alpha(I_t)$. Proceeding by induction on $t\ge1$ we show that
	\begin{equation}\label{eq:ineq-v-sum}
		\v((\sum_{j=1}^tI_j)^k)\ \ge\ \alpha(I_1)k+(\sum_{j=2}^t\alpha(I_j)-t)
	\end{equation}
	for all $k\ge1$, and equality holds for all $k\gg0$ if $\alpha(I_1)=\alpha(I_2)=\dots=\alpha(I_t)$ or for all $k\ge1$ if $\vstab(I_1)=\vstab(I_2)=\cdots=\vstab(I_t)=1$.
	
	For $t=1$, the inequality holds by \cite[Proposition 2.2]{F2023} and the equality holds for all $k\gg0$ by assumption, and for all $k\ge1$ if $\vstab(I_1)=1$ by definition of $\v$-stability.
	
	Let $t>1$. We set $I=I_1+\dots+I_{t-1}$ and $J=I_t$. By inductive hypothesis
	\begin{equation}\label{eq:ineqIndHyp}
		\v(I^k)\ =\ \v((\sum_{j=1}^{t-1}I_j)^k)\ \ge\ \alpha(I_1)k+(\sum_{j=2}^{t-1}\alpha(I_j)-(t-1))
	\end{equation}
	for all $k\ge1$.
	
	Since $\v((I+J)^k)=\min_{P\in\Ass((I+J)^k)}\v_P((I+J)^k)$, to prove the inequality (\ref{eq:ineq-v-sum}), it is enough to show that $\v_P((I+J)^k)$ satisfies the corresponding inequality for all $P\in\Ass((I+J)^k)$ and all $k\ge1$.
	
	Let $P\in\Ass((I+J)^k)$. Then \cite[Corollary 1.3.9]{HH2011} implies that $P=\p+\q$ where $\p\in\Spec(R)$ and $\q\in\Spec(T)$ are monomial prime ideals. By Theorem \ref{Thm:FormulaV-Num-MonId},
	$$
	\v_{P}((I+J)^k)\ =\ \min_{\substack{0\le\ell<k\\ \p\in\Ass(I^{k-\ell})\\ \q\in\Ass(J^{\ell+1})}}(\v_\p(I^{k-\ell})+\v_\q(J^{\ell+1})).
	$$
	Now, let $0\le\ell<k$ such that $\v_{P}((I+J)^k)=\v_\p(I^{k-\ell})+\v_\q(J^{\ell+1})$. Then, using (\ref{eq:ineqIndHyp}) and since by \cite[Proposition 2.2]{F2023} we have $\v_\q(J^{\ell+1})\ge\alpha(J)(\ell+1)-1=\alpha(I_t)(\ell+1)-1$, we obtain that
	\begin{align*}
		\v_{P}((I+J)^k)\ &=\ \v_\p(I^{k-\ell})+\v_\q(J^{\ell+1})\ \ge\ \v(I^{k-\ell})+\v_\q(J^{\ell+1})\\[10pt]
		&\ge\ \alpha(I_1)(k-\ell)+(\sum_{j=2}^{t-1}\alpha(I_j)-(t-1))+\alpha(I_t)(\ell+1)-1\\
		&\ge\ \alpha(I_1)(k-\ell)+(\sum_{j=2}^{t-1}\alpha(I_j)-(t-1))+\alpha(I_1)\ell+\alpha(I_t)-1\\
		&=\ \alpha(I_1)k+(\sum_{j=2}^{t}\alpha(I_j)-t).
	\end{align*}
    Consequently, inequality (\ref{eq:ineq-v-sum}) holds.
    
    Suppose now that $\alpha(I_1)=\alpha(I_2)=\dots=\alpha(I_t)=a$. To prove that equality holds in (\ref{eq:ineq-v-sum}) for all $k\gg0$, let $\p\in\Ass^\infty(I)$, $\q\in\Ass^\infty(J)$ such that $\v_\p(I^k)=\v(I^k)$ and $\v_\q(J^k)=\v(J^k)$ for all $k\gg0$. Set $P=\p+\q$. Then $P\in\Ass^\infty(I+J)$ by Theorem \ref{Thm:AssInfty}. For all $k\gg0$ we can find $0\le\ell<k$ big enough such that $\p\in\Ass(I^{k-\ell})$, $\v_\p(I^{k-\ell})=\v(I^{k-\ell})=a(k-\ell)+((t-2)a-(t-1))$, $\q\in\Ass(J^{\ell+1})$ and $\v_\q(J^{\ell+1})=\v(J^{\ell+1})=a(\ell+1)-1$. Again by Theorem \ref{Thm:FormulaV-Num-MonId} we have
    \begin{align*}
    	\v((I+J)^k)\ &\le\ \v_P((I+J)^k)\ \le\ \v_\p(I^{k-\ell})+\v_\q(J^{\ell+1})\\
    	&=\ a(k-\ell)+((t-2)a-(t-1))+a(\ell+1)-1\\
    	&=\ ak+((t-1)a-t).
    \end{align*}
    This last inequality together with inequality (\ref{eq:ineq-v-sum}) implies the assertion.
    
    Finally, suppose that $\vstab(I_1)=\vstab(I_2)=\cdots=\vstab(I_t)=1$. To prove that equality holds in (\ref{eq:ineq-v-sum}) for all $k\ge1$, let $\p\in\Ass^\infty(I)$ such that $\v_\p(I^{k})=\v(I^{k})$ and let $\q\in\Ass(J)$ such that $\v_\q(J)=\v(J)$. Set $P=\p+\q$. Then $P\in\Ass^\infty(I+J)$ by Theorem \ref{Thm:AssInfty}. By induction on $t$, $\v_\p(I^k)=\v(I^k)=\alpha(I_1)k+(\sum_{j=2}^{t-1}\alpha(I_j)-(t-1))$ for all $k\ge1$ and since $\vstab(I_t)=1$, $\v_\q(J)=\v(J)=\alpha(J)-1=\alpha(I_t)-1$. Again by Theorem \ref{Thm:FormulaV-Num-MonId} we have
    \begin{align*}
    	\v((I+J)^k)\ &\le\ \v_P((I+J)^k)\ \le\ \v_\p(I^{k})+\v_\q(J)\\[10.5pt]
    	&=\ \alpha(I_1)(k-\ell)+(\sum_{j=2}^{t-1}\alpha(I_j)-(t-1))+\alpha(I_t)-1\\
    	&=\ \alpha(I_1)(k-\ell)+(\sum_{j=2}^{t}\alpha(I_j)-t).
    \end{align*}
    This last inequality together with inequality (\ref{eq:ineq-v-sum}) implies the assertion.
\end{proof}

Notice that for any graded ideal $I\subset S$, we have $\reg(I^k)\ge\alpha(I)k$ for all $k\ge1$. Combining this observation with Theorem \ref{Thm:DisjointSupports} and Corollary \ref{Cor:regIneq} we obtain
\begin{Corollary}\label{Cor:regBound}
	Let $I_1,\dots,I_t\subset S=K[x_1,\dots,x_n]$ be monomial ideals with pairwise disjoint supports. Suppose that $\v(I_j^k)=\alpha(I_j)k-1$ for all $k\gg0$ and all $j=1,\dots,t$. Then, for all $k\gg0$
	$$
	\reg((\sum_{j=1}^tI_j)^k)\ >\ \v((\sum_{j=1}^tI_j)^k)\ \ge\ (\min_{1\le j\le t}\alpha(I_j))k+(\sum_{j=1}^t\alpha(I_j)-\min_{1\le j\le t}\alpha(I_j)-t).
	$$
\end{Corollary}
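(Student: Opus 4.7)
The plan is to obtain Corollary \ref{Cor:regBound} by combining Theorem \ref{Thm:DisjointSupports} with an iterated application of Corollary \ref{Cor:regIneq}, where the key input that makes the iteration start is the very strong hypothesis $\v(I_j^k)=\alpha(I_j)k-1$.

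The right-hand inequality is already in hand: it is exactly the content of Theorem \ref{Thm:DisjointSupports} applied to $I_1,\dots,I_t$, so nothing needs to be done there beyond invoking it. All the work therefore goes into the left-hand strict inequality $\reg\bigl((\sum_{j=1}^t I_j)^k\bigr)>\v\bigl((\sum_{j=1}^t I_j)^k\bigr)$ for $k\gg 0$.

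For this, I would first observe that the hypothesis gives a trivial comparison between $\v$ and $\reg$ on each single $I_j$: since $\reg(I_j^k)\ge\alpha(I_j)k$ for every $k\ge 1$, we have
\[
\v(I_j^k)\ =\ \alpha(I_j)k-1\ <\ \alpha(I_j)k\ \le\ \reg(I_j^k)
\]
for all $k\gg 0$ and all $j=1,\dots,t$. Thus every ideal $I_j$ satisfies the running hypothesis of Corollary \ref{Cor:regIneq}. I would then proceed by induction on $t$. For $t=1$ there is nothing to prove. For the inductive step, set $I=I_1+\cdots+I_{t-1}$ and $J=I_t$; since the supports are pairwise disjoint, $I$ and $J$ are monomial ideals in disjoint sets of variables, so Corollary \ref{Cor:regIneq} applies. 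The inductive hypothesis gives $\v(I^k)<\reg(I^k)$ for $k\gg 0$, and the single-ideal computation above gives $\v(J^k)<\reg(J^k)$ for $k\gg 0$. Corollary \ref{Cor:regIneq} then yields $\v((I+J)^k)<\reg((I+J)^k)$ for $k\gg 0$, which is the desired strict inequality for $\sum_{j=1}^t I_j$.

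Combining the two parts produces the chain of inequalities in the statement. There is no real obstacle here: the proof is essentially a bookkeeping argument, and the only subtle point to check carefully is that the inductive partition $I=I_1+\cdots+I_{t-1}$, $J=I_t$ keeps the disjoint-support assumption intact so that Corollary \ref{Cor:regIneq}, which requires the two summands to live in disjoint sets of variables, can legitimately be invoked at each step.
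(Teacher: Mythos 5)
Your proposal is correct and follows exactly the route the paper intends: the lower bound is Theorem \ref{Thm:DisjointSupports}, and the strict inequality comes from $\v(I_j^k)=\alpha(I_j)k-1<\alpha(I_j)k\le\reg(I_j^k)$ together with an iterated application of Corollary \ref{Cor:regIneq} over the disjoint-support decomposition. The paper states this as an immediate consequence without spelling out the induction, so your write-up is just a more explicit version of the same argument.
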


Let $G$ be a finite simple graph on the vertex set $V(G)=\{x_1,\dots,x_n\}$ and edge set $E(G)$. The \textit{edge ideal} of $G$ is the monomial ideal $I(G)$ of $S=K[x_1,\dots,x_n]$ generated by the monomials $x_ix_j$ such that $\{x_i,x_j\}\in E(G)$.

We denote by $c(G)$ the number of connected components of $G$.
\begin{Corollary}\label{Cor:v(I(G)^k)}
	Let $G$ be a finite simple graph. Then, for all $k\gg0$,
	$$
	\v(I(G)^k)\ =\ 2k+(c(G)-2).
	$$
\end{Corollary}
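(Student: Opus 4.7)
The plan is to deduce this corollary directly from Theorem \ref{Thm:DisjointSupports}, since the edge ideals of the connected components of $G$ give precisely the kind of decomposition required there. I would first let $G_1,\dots,G_c$ be the connected components of $G$ containing at least one edge; with this understanding of $c=c(G)$ (isolated vertices contribute nothing to $I(G)$ and so cannot influence $\v(I(G)^k)$), one has $I(G)=I(G_1)+\cdots+I(G_c)$ and, because distinct components share no vertex, the ideals $I(G_j)$ are monomial ideals with pairwise disjoint supports.

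Next I would verify the hypothesis of Theorem \ref{Thm:DisjointSupports}. Since each $G_j$ is connected with at least one edge, the theorem of Biswas, Mandal and Saha \cite[Theorem 5.1]{BMS24} yields $\v(I(G_j)^k)=2k-1$ for all $k\gg 0$. As every edge ideal is generated in degree $2$, we have $\alpha(I(G_j))=2$, so the previous identity takes the required form $\v(I(G_j)^k)=\alpha(I(G_j))k-1$. Moreover, all the initial degrees $\alpha(I(G_j))$ coincide with the common value $2$, so the equality clause of Theorem \ref{Thm:DisjointSupports} is in force for $k\gg 0$.

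Finally, substituting $t=c$ and $\alpha(I(G_j))=2$ into the formula of Theorem \ref{Thm:DisjointSupports} gives
$$\v(I(G)^k)\ =\ 2k+\Big(\sum_{j=1}^{c}2-2-c\Big)\ =\ 2k+(c-2)$$
for all $k\gg 0$, which is the desired formula. I do not anticipate any serious obstacle; the only mildly delicate point is fixing the convention that $c(G)$ counts the components containing at least one edge, which is forced by the trivial observation that adjoining isolated vertices alters neither $I(G)$ nor $\v(I(G)^k)$.
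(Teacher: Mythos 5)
Your proof is correct and follows essentially the same route as the paper: decompose $I(G)$ as the sum of the edge ideals of the connected components, invoke \cite[Theorem 5.1]{BMS24} to get $\v(I(G_j)^k)=2k-1=\alpha(I(G_j))k-1$, and apply the equal-initial-degree case of Theorem \ref{Thm:DisjointSupports}. Your explicit remark that $c(G)$ must count only components containing an edge (isolated vertices being irrelevant to $I(G)$) is a small but sensible clarification that the paper leaves implicit.
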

\begin{proof}
	Let $G_1,\dots,G_{c(G)}$ be the connected components of $G$. Set $I_i=I(G_i)$ for $i=1,\dots,c(G)$. Then $I(G)=I_1+\dots+I_{c(G)}$. It is proved in \cite[Theorem 5.1]{BMS24} that if $H$ is a connected graph, then $\v(I(H)^k)=2k-1$ for all $k\gg0$. Hence, we have $\v(I_i^k)=2k-1$ for all $i=1,\dots,c(G)$ and all $k\gg0$. Since $I_1,\dots,I_{c(G)}$ have pairwise disjoint supports, Theorem \ref{Thm:DisjointSupports} implies that
	\begin{align*}
		\v(I(G)^k)\ &=\ (\min_{1\le j\le c(G)}\alpha(I_j))k+(\sum_{j=1}^{c(G)}\alpha(I_j)-\min_{1\le j\le c(G)}\alpha(I_j)-t)\\
		&=\ 2k+(2c(G)-2-c(G))\\
		&=\ 2k+(c(G)-2),
	\end{align*}
    as desired.
\end{proof}

\begin{Example}\label{ex:cycle}
	\rm Let $G=C_5$ be the 5-cycle on vertex set $\{x_1,x_2,x_3,x_4,x_5\}$. That is $I(G)=(x_1x_2,x_2x_3,x_3x_4,x_4x_5,x_5x_1)$. By \cite[Proposition 5.11]{BMS24}, we have that $\v(I(G)^k)=2k-1$ for all $k\ge2=\lfloor\frac{5}{2}\rfloor$. Using the package \texttt{VNumber} \cite{FSPack}, we checked that $\v(I(G))=2$. Thus $\vstab(I(G))=2$.
\end{Example}

The conclusion of Theorem \ref{Thm:DisjointSupports} is no longer valid if the ideals $I_1,\dots,I_t$ do not have the same initial degree, or if $\vstab(I_j)>1$ for some $j$. We demonstrate this with the next example.
\begin{Example}
	\rm Let $S=K[x_1,\dots,x_5,y_1,\dots,y_5,z_1,\dots,z_5]$, and let
	\begin{align*}
		I\ &=\ (x_1x_2,x_2x_3,x_3x_4,x_4x_5,x_5x_1),\\
		J\ &=\ (y_1y_2,\,y_2y_3,\,y_3y_4,\,y_4y_5,\,y_5y_1),\\
		L\ &=\ (z_1z_2,\,z_2z_3,\,z_3z_4,\,z_4z_5,\,z_5z_1).
	\end{align*}
    We set $I_1=I$ and $I_2=JL$. Then $I_1$ and $I_2$ have disjoint supports. Notice that $I,J,L$ are the edge ideals of the 5-cycles on vertex sets $\{x_1,\dots,x_5\}$, $\{y_1,\dots,y_5\}$, $\{z_1,\dots,z_5\}$, respectively. By Example \ref{ex:cycle}, we have
    $$
    \v(I^k)\ =\ \v(J^k)\ =\ \v(L^k)\ =\ \begin{cases}
    	\hfill2&\textup{if}\ k=1,\\ 2k-1&\textup{otherwise.}
    \end{cases}
    $$
    By Corollary \ref{Cor:Prod}, we obtain that
    $$
    \v(I_2^k)\ =\ \begin{cases}
    	\hfill4&\textup{if}\ k=1,\\ 4k-1&\textup{otherwise.}
    \end{cases}
    $$
    Thus $I_1$ and $I_2$ satisfy the equation $\v(I_i^k)=\alpha(I_i)k-1$ for all $k\ge2$. Notice that $\alpha(I_1)=2\ne4=\alpha(I_2)$ and $\vstab(I_1)=\vstab(I_2)=2>1$. By Theorem \ref{Thm:DisjointSupports} we have that $\v((I_1+I_2)^k)\ \ge\ 2k+2$ for all $k\gg0$. We now show that
    $$
    \v((I_1+I_2)^k)\ =\ 2k+3
    $$
    for all $k\gg0$.\\
	For this aim, we apply Theorem \ref{Thm:FormulaV-Num-MonId}. Let $k\gg0$ and let $P\in\Ass^\infty(I_1+I_2)$. There exist $0\le\ell<k$, $\p\in\Ass(I_1^{k-\ell})$ and $\q\in\Ass(I_2^{\ell+1})$ such that $P=\p+\q$. We have
	$$
	\v_\p(I_1^{k-\ell})+\v_\q(I_2^{\ell+1})\ \ge\ \v(I_1^{k-\ell})+\v(I_2^{\ell+1})\ =\ \begin{cases}
		\hfill 2k+3&\textup{if}\ \ell=0,\\
		2k+(2\ell+2)&\textup{if}\ 0<\ell<k.
	\end{cases}
	$$
	By Theorem \ref{Thm:DisjointSupports} we then deduce that $\v_P((I_1+I_2)^k)\ge 2k+3$ for all $k\gg0$. Since $P$ is arbitrary, then $\v((I_1+I_2)^k)\ge 2k+3$ for all $k\gg0$. By choosing $\ell=0$, $\p\in\Ass(I_1^k)$ such that $\v_\p(I_1^k)=\v(I_1^k)=2k-1$ and $\q\in\Ass(I_2)$ such that $\v_\q(I_2)=\v(I_2)=4$, we see that the desired equality indeed holds for all $k\gg0$.
\end{Example}

The following result follows immediately by Corollaries \ref{Cor:regBound} and \ref{Cor:v(I(G)^k)}. However, we provide a different proof using a result of Beyarslan, H\`a and Trung \cite[Theorem 4.5]{BHT15}.
\begin{Corollary}
	Let $G$ be a finite simple graph. Then, for all $k\gg0$,
	$$
	\v(I(G)^k)\ <\ \reg(I(G)^k).
	$$
\end{Corollary}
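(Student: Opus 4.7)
The plan is to combine the exact formula for $\v(I(G)^k)$ supplied by Corollary \ref{Cor:v(I(G)^k)} with the Beyarslan--Hà--Trung lower bound for $\reg(I(G)^k)$, and then to establish a simple combinatorial inequality between the number of connected components of $G$ and its induced matching number.

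Recall that the induced matching number $\nu(G)$ is the largest size of a set of pairwise non-adjacent edges of $G$ whose endpoints span no further edge. Beyarslan, Hà and Trung's Theorem 4.5 in \cite{BHT15} states that for every finite simple graph $H$ and every $k\ge 1$,
\[
\reg(I(H)^k)\ \ge\ 2k+\nu(H)-1.
\]
I would apply this directly to $G$. The key combinatorial step is the inequality $\nu(G)\ge c(G)$, where $c(G)$ here counts the connected components of $G$ that contain at least one edge (isolated vertices do not contribute to $I(G)$, hence are irrelevant both for $\v(I(G)^k)$ in Corollary \ref{Cor:v(I(G)^k)} and for $\nu(G)$). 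This inequality is immediate: if $G_1,\dots,G_{c(G)}$ are the connected components of $G$ having at least one edge, then picking any single edge $e_i$ from each $G_i$ yields a set $\{e_1,\dots,e_{c(G)}\}$ which is automatically an induced matching, because edges belonging to different connected components cannot share a vertex and cannot be joined by any third edge of $G$.

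Combining these, for all $k\ge 1$ we have
\[
\reg(I(G)^k)\ \ge\ 2k+\nu(G)-1\ \ge\ 2k+c(G)-1.
\]
On the other hand, Corollary \ref{Cor:v(I(G)^k)} gives $\v(I(G)^k)=2k+(c(G)-2)$ for all $k\gg 0$. Subtracting yields $\reg(I(G)^k)-\v(I(G)^k)\ge 1$ for all $k\gg 0$, which is the desired strict inequality.

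I do not anticipate any genuine obstacle here: the proof is essentially a one-line comparison between two known bounds, once the easy observation $\nu(G)\ge c(G)$ is in place. The only minor subtlety worth flagging is the convention about isolated vertices, which must be handled consistently on both sides of the inequality — this is what makes the comparison between $c(G)$ (components with an edge) and $\nu(G)$ (edge-level invariant) sharp enough to close the gap.
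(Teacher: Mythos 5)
Your proposal is correct and follows essentially the same route as the paper: both invoke Corollary \ref{Cor:v(I(G)^k)} for the exact value $2k+(c(G)-2)$, the Beyarslan--H\`a--Trung bound $\reg(I(G)^k)\ge 2k+\textup{im}(G)-1$ from \cite[Theorem 4.5]{BHT15}, and the observation that choosing one edge per connected component gives $\textup{im}(G)\ge c(G)$. Your remark on isolated vertices is a reasonable (and harmless) refinement of the same argument, so there is nothing further to add.
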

\begin{proof}
	Let $M=\{e_1,\dots,e_r\}$ be a subset of $E(G)$. Recall that $M$ is a called a $r$-matching of $G$ if $e_i\cap e_j=\emptyset$ for all $i\ne j$. Whereas, $M$ is called an induced $r$-matching if $M$ is a $r$-matching and the only edges of the induced subgraph of $G$ on the vertex set $V(M)=\bigcup_{i=1}^re_i$ are the edges of $M$. We call $r$ the size of $M$. We denote by $\textup{im}(G)$ the largest size of an induced matching of $G$. It is clear that $\textup{im}(G)\ge c(G)$. Indeed, let $G_1,\dots,G_{c(G)}$ be the connected components of $G$, and pick an edge $e_i\in E(G_i)$ for each $i=1,\dots,c(G)$. Then $M=\{e_1,\dots,e_{c(G)}\}$ is an induced $c(G)$-matching of $G$.
	
	It is shown in \cite[Theorem 4.5]{BHT15} that $\reg(I(G)^k)\ge 2k+\textup{im}(G)-1$ for all $k\ge1$. Since $\textup{im}(G)\ge c(G)$ and by Corollary \ref{Cor:v(I(G)^k)} we have $\v(I(G)^k)=2k+(c(G)-2)$ for all $k\gg0$, the assertion follows.
\end{proof}

Recall that an ideal $I\subset S$ is called a \textit{complete intersection} if $I$ is generated by a regular sequence. It is well-known that a monomial ideal $I\subset S$ is a complete intersection if and only if $\supp(u)\cap\supp(v)=\emptyset$ for all $u,v\in\mathcal{G}(I)$ with $u\ne v$.
\begin{Corollary}\label{Cor:Pedro}
	Let $I\subset S=K[x_1,\dots,x_n]$ be a monomial complete intersection.\smallskip
	\begin{enumerate}
		\item[\textup{(a)}] We have
		$$
		\Ass^\infty(I)\ =\ \{\!\!\sum_{u\in\mathcal{G}(I)}(x_{u})\ :\ x_{u}\in\supp(u)\}.
		$$
		\item[\textup{(b)}] $\Ass(I^k)=\Ass^\infty(I)$ for all $k\ge1$.
		\item[\textup{(c)}] For all $k\ge1$,
		$$
		\v(I^k)\ =\ \alpha(I)k+(\!\!\sum_{u\in\mathcal{G}(I)}\deg(u)-\alpha(I)-\mu(I)).
		$$
	\end{enumerate} 
\end{Corollary}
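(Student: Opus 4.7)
The plan is to decompose $I$ into its principal generating ideals and apply Example~\ref{ex:Pedro} together with Theorem~\ref{Thm:DisjointSupports}. Write $\mathcal{G}(I)=\{u_1,\dots,u_t\}$ with $t=\mu(I)$ and set $I_j:=(u_j)$. Since $I$ is a monomial complete intersection, the supports $\supp(u_1),\dots,\supp(u_t)$ are pairwise disjoint, so $I=I_1+\cdots+I_t$ and the ideals $I_j$ themselves have pairwise disjoint supports. Moreover, writing $u_j=\prod_{x\in\supp(u_j)}x^{a_x}$ as its factorization into irreducibles in $S$, Example~\ref{ex:Pedro} gives that for every $j$ and every $k\ge1$,
$$
\Ass(I_j^k)=\Ass^\infty(I_j)=\{(x):x\in\supp(u_j)\},\qquad \v(I_j^k)=\alpha(I_j)k-1=\deg(u_j)k-1,
$$
and $\vstab(I_j)=1$.

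For parts (a) and (b), I would invoke the classical fact that since $u_1,\dots,u_t$ is a regular sequence in $S$, the quotient $S/I^k$ is Cohen--Macaulay for every $k\ge1$ (this follows by induction on $k$ using that $I^k/I^{k+1}$ is a free $S/I$-module). Consequently $\Ass(I^k)=\Min(I^k)=\Min(I)$ for all $k\ge1$. To describe $\Min(I)$ explicitly, observe that $\sqrt{I}=(v_1,\dots,v_t)$ with $v_j=\prod_{x\in\supp(u_j)}x$, so a prime $\p$ contains $\sqrt{I}$ precisely when $\p$ meets each $\supp(u_j)$. The minimal such primes are therefore exactly the ideals $\sum_{u\in\mathcal{G}(I)}(x_u)$ with $x_u\in\supp(u)$, which is the set displayed in (a). This proves (a) and (b) simultaneously.

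For part (c), I would apply Theorem~\ref{Thm:DisjointSupports} to the decomposition $I=I_1+\cdots+I_t$. The hypothesis $\v(I_j^k)=\alpha(I_j)k-1$ for all $k\gg0$ holds for every $j$ by the setup above, and since in addition $\vstab(I_j)=1$ for every $j$, the clause of Theorem~\ref{Thm:DisjointSupports} giving equality for all $k\ge1$ applies. This yields
$$
\v(I^k)=\Bigl(\min_{1\le j\le t}\alpha(I_j)\Bigr)k+\Bigl(\sum_{j=1}^t\alpha(I_j)-\min_{1\le j\le t}\alpha(I_j)-t\Bigr)
$$
for every $k\ge1$, and the substitutions $\alpha(I_j)=\deg(u_j)$, $\min_j\alpha(I_j)=\alpha(I)$, $\sum_j\alpha(I_j)=\sum_{u\in\mathcal{G}(I)}\deg(u)$, and $t=\mu(I)$ give the stated formula. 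The step I expect to require the most care is the Cohen--Macaulay assertion underpinning (b); if one prefers to avoid citing it, an explicit primary decomposition of $I^k$ can instead be built by hand, exploiting the pairwise disjointness of $\supp(u_1),\dots,\supp(u_t)$.
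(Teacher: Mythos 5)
Your proposal is correct, and for part (c) it coincides with the paper's argument: both decompose $I$ as the sum of the principal ideals $(u_j)$, note that these have pairwise disjoint supports with $\v((u_j)^k)=\deg(u_j)k-1$ and $\vstab((u_j))=1$ (the paper verifies this directly in its base case, you cite Example~\ref{ex:Pedro}, which for a monomial gives exactly this since all irreducible factors are variables of degree one), and then invoke the equality clause of Theorem~\ref{Thm:DisjointSupports} for all $k\ge1$. Where you genuinely diverge is in parts (a) and (b): the paper proceeds by induction on $\mu(I)$, peeling off one generator and using Nguyen--Tran's formula $\Ass((J+L)^k)=\bigcup_{0\le\ell<k}\{\p+\q:\p\in\Ass(J^{k-\ell}),\,\q\in\Ass(L^{\ell+1})\}$ from \cite[Theorem 4.1(3)]{NguyenT}, whereas you avoid that citation entirely by using the classical facts that for an ideal generated by a regular sequence the associated graded ring is a polynomial ring over $S/I$ (so $I^k/I^{k+1}$ is $S/I$-free), that the depth lemma then forces $S/I^k$ to be Cohen--Macaulay for all $k$, hence $\Ass(I^k)=\Min(I^k)=\Min(I)$, and finally an explicit identification of $\Min(I)$ as the primes obtained by choosing one variable from each $\supp(u_j)$ (here the disjointness of supports guarantees these choices give incomparable, hence minimal, primes). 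Your route buys independence from the external associated-primes formula and gives (a) and (b) in one stroke without induction, at the cost of importing the standard Rees/associated-graded and depth-lemma machinery, which the paper does not otherwise use; the paper's route stays closer to the toolkit it has already set up (sums of ideals in disjoint variables and their asymptotic associated primes). Both arguments are complete and the numerical bookkeeping in (c) — $\alpha(I_j)=\deg(u_j)$, $\min_j\alpha(I_j)=\alpha(I)$, $t=\mu(I)$ — is carried out correctly.
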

\begin{proof}
	Let $u=x_{i_1}^{a_{i_1}}\cdots x_{i_d}^{a_{i_d}}\in S$ be a monomial with $i_1<i_2<\dots<i_d$ and $a_{i_j}\ge1$ for all $1\le j\le d$. Then $(x_{i_1}^{a_{i_1}})\cap\dots\cap(x_{i_d}^{a_{i_d}})$ is the primary decomposition of $I=(u)$, and $\Ass(I)=\{(x_i):x_i\in\supp(u)\}$. Notice that $\supp(u)=\supp(u^k)$ for all $k\ge1$. Hence $\Ass(I^k)=\Ass^\infty(I)$ for all $k\ge1$. Furthermore, for each $(x_{i_j})\in\Ass(I^k)$, we have $(I^k:(u/x_{i_j}))=(x_{i_j})$ and so $\v(I^k)\le\deg(u^k/x_{i_j})=\deg(u^k)-1=\alpha(I)k-1$ for all $k\ge1$. By \cite[Proposition 2.2]{F2023} we also have $\v(I^k)\ge\alpha(I)k-1$ for all $k\ge1$. Hence $\v(I^k)=\alpha(I)k-1$ for all $k\ge1$. Thus (a), (b) and (c) hold if $\mu(I)=1$.
	
	Suppose now that $\mu(I)>1$. Let $\mathcal{G}(I)=\{u_1,\dots,u_m\}$. Set $J=(u_1,\dots,u_{m-1})$ and $L=(u_m)$. Then $I=J+L$ and $J$ and $L$ have pairwise disjoint supports. By induction on $\mu(I)$, the ideals $J$ and $L$ satisfy statements (a), (b) and (c). By \cite[Theorem 4.1 (3)]{NguyenT} we have for all $k\ge1$
	$$
	\Ass(I^k)\ =\ \bigcup_{\substack{0\le\ell<k\\ \p\in\Ass(J^{k-\ell})\\ \q\in\Ass(L^{\ell+1})}}\{\p+\q\}.
	$$
	Since $\Ass(J^k)=\Ass^\infty(J)$ and $\Ass(L^k)=\Ass^\infty(L)$ for all $k\ge1$, the previous equation implies statements (a) and (b). Statement (c) follows by Theorem \ref{Thm:DisjointSupports} since $\v((u_i)^k)=\deg(u_i)k-1$ and $\vstab((u_i))=1$ for all $1\le i\le m$.
\end{proof}

We end the paper with another application of Theorem \ref{Thm:DisjointSupports}.\smallskip

Following \cite{MKA16}, (see also \cite{CF3,M2023}) we say that a monomial ideal $I\subset S=K[x_1,\dots,x_n]$ is \textit{vertex splittable} if it can be obtained by the following recursive procedure.
\begin{enumerate}
	\item[(i)] If $u$ is a monomial and $I=(u)$, $I=0$ or $I=S$, then $I$ is vertex splittable.\smallskip
	\item[(ii)] If there exists a variable $x_i$ and vertex splittable ideals $I_1\subset S$ and $I_2\subset K[x_1,\dots,x_{i-1},x_{i+1},\dots,x_n]$ such that $I=x_iI_1+I_2$, $I_2\subseteq I_1$ and $\mathcal{G}(I)$ is the disjoint union of $\mathcal{G}(x_iI_1)$ and $\mathcal{G}(I_2)$, then $I$ is vertex splittable.
\end{enumerate}
In the case (ii), the decomposition $I=x_iI_1+I_2$ is called a \textit{vertex splitting} of $I$ and $x_i$ is called a \textit{splitting vertex} of $I$.

\begin{Examples}
	\rm Many classes of monomial ideals are vertex splittable.\medskip
	
	(a) Edge ideals with linear resolution are vertex splittable, see \cite[Theorem 3.7 and Corollary 3.8]{MKA16}.\medskip
	
	(b) Cover ideals of Cohen-Macaulay very well-covered graphs, which include the Hibi ideals, are vertex splittable, see the proofs of \cite[Lemma 3.4 and Corollary 3.5]{CF2} or \cite[Proposition 2.3 and Lemma 3.3]{CF1}.\medskip
	
	(c) Let ${\bf t}=(t_1\dots,t_{d-1})\in\ZZ_{\ge0}^{d-1}$ with $d\ge2$. Following \cite{F-Vect,CF}, we say that $u=x_{i_1}\cdots x_{i_\ell}\in S$, $i_1\le\cdots\le i_\ell$, is a ${\bf t}$-spread monomial if $\ell\le d$ and $i_{j+1}-i_j\ge t_j$ for $j=1,\dots,\ell-1$. A monomial ideal $I\subset S$ is called ${\bf t}$-spread if each $u\in\mathcal{G}(I)$ is ${\bf t}$-spread, and it is called ${\bf t}$-spread strongly stable if $I$ is ${\bf t}$-spread and for all $u\in\mathcal{G}(I)$ and all $i<j$ such that $x_j\in\supp(u)$ and $x_i(u/x_j)$ is ${\bf t}$-spread, then $x_i(u/x_j)\in I$. It is shown in \cite[Proposition 1(a)]{CF3} that ${\bf t}$-spread strongly stable ideals are vertex splittable.\medskip
	
	(d) For a monomial $u\in S$, we set $\deg_{x_i}(u)=\max\{j:x_i^j\ \textup{divides}\ u\}$. A monomial ideal $I\subset S$ generated in a single degree is called polymatroidal if the exchange property holds: for all $u,v\in\mathcal{G}(I)$ and all $i$ with $\deg_{x_i}(u)>\deg_{x_i}(v)$ there exists $j$ with $\deg_{x_j}(u)<\deg_{x_j}(v)$ and $x_j(u/x_i)\in\mathcal{G}(I)$. Let $I_{\langle j\rangle}$ be the monomial ideal generated by the monomials of degree $j$ belonging to $I$. We say that $I$ is componentwise polymatroidal if $I_{\langle j\rangle}$ is polymatroidal for all $j$. Componentwise polymatroidal ideals are vertex splittable, see \cite[Theorem 3.1]{F-Polym} or \cite[Proposition 2]{CF3}.
\end{Examples}\bigskip

The following is an important property of vertex splittable ideals.
\begin{Proposition}\label{Prop:Saha}
	\textup{\cite[Theorem 4.15]{BMS24}} Let $I\subset S$ be an equigenerated vertex splittable monomial ideal. Then $\vstab(I)=1$ and $\v(I^k)=\alpha(I)k-1$ for all $k\ge1$.
\end{Proposition}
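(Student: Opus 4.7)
The plan is to show that the lower bound $\v(I^k)\ge\alpha(I)k-1$ (from \cite[Proposition 2.2]{F2023}, valid for any monomial ideal) is achieved: for each $k\ge1$, I will construct an explicit monomial $f\in S$ of degree $\alpha(I)k-1$ together with a monomial prime $\p\subset S$ satisfying $(I^k:f)=\p$. This immediately yields $\v_{\p}(I^k)\le\alpha(I)k-1$, and the resulting equality $\v(I^k)=\alpha(I)k-1$ for every $k\ge1$ forces $\vstab(I)=1$.

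The argument proceeds by induction on the recursive vertex-splitting structure. The base case $I=(u)$ principal is handled as in Example \ref{ex:Pedro}: for any $x\in\supp(u)$, $(I^k:u^k/x)=(x)$. For the inductive step, let $I=x_iI_1+I_2$ be a vertex splitting. Since $\mathcal{G}(I)=\mathcal{G}(x_iI_1)\sqcup\mathcal{G}(I_2)$ and $I$ is equigenerated of degree $d:=\alpha(I)$, both $I_1$ and $I_2$ (if nonzero) are equigenerated vertex splittable ideals of degrees $d-1$ and $d$ respectively. The inductive hypothesis applied to $I_1$ furnishes a monomial $g\in S$ and a monomial prime $\p$ with $(I_1^k:g)=\p$ and $\deg g=(d-1)k-1$. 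My candidate is $f=x_i^k g$, of degree $k+(d-1)k-1=dk-1=\alpha(I)k-1$.

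The central step will be to verify $(I^k:x_i^k g)=\p$. Using the expansion $I^k=\sum_{j=0}^k x_i^j I_1^j I_2^{k-j}$, the inclusion $\p\subseteq(I^k:x_i^k g)$ comes from the $j=k$ summand: for $z\in\p$, $zg\in I_1^k$, so $x_i^k zg\in x_i^k I_1^k\subseteq I^k$. For the reverse inclusion I would take a variable $z\notin\p$ and suppose $zx_i^k g\in x_i^j I_1^j I_2^{k-j}$ for some $j$. The case $j=k$ reduces to $zg\in I_1^k$, which contradicts $z\notin\p$; the case $j<k$ yields $zx_i^{k-j}g\in I_1^jI_2^{k-j}\subseteq I_1^k$ (using $I_2\subseteq I_1$), and a case analysis on whether $x_i\in\p$, exploiting primality of $\p$ together with the fact that $I_2$ does not involve $x_i$, is expected to rule this out. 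The hard part will be this last case when generators of $I_1$ themselves involve $x_i$: in that situation $(I^k:x_i^k)$ strictly contains $I_1^k$ (as one sees with $I=(x^2,xy,yz)$), so the shortcut $(I^k:x_i^k g)=(I_1^k:g)$ fails and one must argue directly that the extra colon elements still lie in $\p$. A fallback approach would be to take $f=u^k/x_i$ for $u=x_iv\in\mathcal{G}(x_iI_1)$ with $v$ chosen so that $(I:v)$ is a monomial prime, which shifts the difficulty to strengthening the inductive statement so that it also produces such a certificate $v$ at each step.
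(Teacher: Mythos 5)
First, note that the paper does not prove this proposition at all: it is imported verbatim from \cite[Theorem 4.15]{BMS24}, so your argument has to stand entirely on its own. It does not yet, and the gap is exactly the one you flag as ``the hard part''. The central identity $(I^k:x_i^kg)=\p$ is simply false for an arbitrary witness $g$ supplied by the inductive hypothesis, and worse, the colon need not even be prime, so $f=x_i^kg$ can fail to certify anything. Concretely, in $S=K[x,y,z,u,v]$ take $I_1=(xy^3,\,y^3z)$, $I_2=(y^3zuv)$ and $I=xI_1+I_2=(x^2y^3,\,xy^3z,\,y^3zuv)$; this is a vertex splitting, $I$ is equigenerated of degree $5$, and both $I_1,I_2$ are vertex splittable. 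For $k=2$ the monomial $g=y^6z$ is a legitimate output of your inductive hypothesis: $\deg g=7=(d-1)k-1$ and $(I_1^2:g)=(x,z)=\p$. But for $f=x^2g=x^2y^6z$ one has $uv\in(I^2:f)$, since $(x^2y^3)(y^3zuv)=uv\,f$, while neither $u$ nor $v$ lies in $(I^2:f)$ (every generator of $I^2$ other than $x^2y^6zuv$ fails to divide $uf$ or $vf$ for degree reasons in $x$, $z$, or $v$). Hence $(I^2:f)$ is not prime and $f$ is not a witness. Even in your own test ideal $(x^2,xy,yz)$, taking the admissible witness $g=y$ gives $(I^2:x^2y)=(x,y,z)\ne\p$; there it is only by luck that the colon is still prime. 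So the case distinction ``$x_i\in\p$'' cannot be ruled out by primality of $\p$ and the fact that $x_i\notin\supp(I_2)$: it genuinely occurs and breaks the candidate.

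The theorem itself is not threatened by these examples (in the first one $g'=xy^6$ gives $(I^2:x^2g')=(x,z)$, of the right degree $9$), but this shows the inductive statement must be strengthened so that a \emph{controlled} witness is produced and propagated through the splitting --- for instance one adapted to the splitting variable, as in your fallback suggestion $f=u^k/x_i$ --- and proving that such a witness always exists is precisely the substance of \cite[Theorem 4.15]{BMS24}, not a routine case analysis. As it stands, the only parts of your proposal that are solid are the reduction to exhibiting a single witness of degree $\alpha(I)k-1$ (via the lower bound of \cite[Proposition 2.2]{F2023}, which also yields $\vstab(I)=1$ once the equality is known for all $k\ge1$), the bookkeeping of degrees of $I_1$ and $I_2$, and the easy inclusion $\p\subseteq(I^k:x_i^kg)$ together with the case $j=k$ and the case $x_i\notin\p$; the remaining case is an acknowledged but unfilled hole, and the counterexample above shows it cannot be filled without changing the induction hypothesis.
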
\medskip

This result is no longer valid if $I$ is not equigenerated.
\begin{Example}
	\rm Let $I=(x_1^2,x_1x_2,x_1x_3^2,x_3^3)\subset K[x_1,x_2,x_3]$. Then $I$ is vertex splittable. Indeed, $I=x_1(x_1,x_2,x_3^2)+(x_3^3)$ is a vertex splitting, as $I_1=(x_1,x_2,x_3^2)$, $I_2=(x_3^3)$ are clearly vertex splittable and $I_2\subset I_1$. Notice that $I$ is not generated in a single degree. The \textit{Macaulay2} \cite{GDS} package \texttt{VNumber} \cite{FSPack} reveals that $\v(I^k)=2k$ for all $k\gg0$.
\end{Example}\bigskip

Combining Proposition \ref{Prop:Saha} with Theorem \ref{Thm:DisjointSupports} we obtain
\begin{Corollary}
	Let $I_1,\dots,I_t\subset S$ be equigenerated vertex splittable monomial ideals. Then, for all $k\ge1$,
	$$
	\v((\sum_{j=1}^tI_j)^k)\ =\ (\min_{1\le j\le t}\alpha(I_j))k+(\sum_{j=1}^t\alpha(I_j)-\min_{1\le j\le t}\alpha(I_j)-t).
	$$
\end{Corollary}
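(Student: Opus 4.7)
The plan is to combine Proposition \ref{Prop:Saha} with Theorem \ref{Thm:DisjointSupports} in essentially the most straightforward way possible. There is almost no room for choice here; the content of the corollary is that the hypotheses of the second conclusion of Theorem \ref{Thm:DisjointSupports} (the one guaranteeing equality for every $k \ge 1$, not only for $k \gg 0$) are automatically satisfied by equigenerated vertex splittable ideals supported on disjoint variable sets.

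First I would apply Proposition \ref{Prop:Saha} to each $I_j$. Since each $I_j$ is assumed equigenerated and vertex splittable, the proposition yields two conclusions at once: $\v(I_j^k) = \alpha(I_j) k - 1$ for all $k \ge 1$, and $\vstab(I_j) = 1$. Thus the hypothesis $\v(I_j^k) = \alpha(I_j)k - 1$ for all $k \gg 0$ that appears in Theorem \ref{Thm:DisjointSupports} is satisfied, and moreover the strong equality condition $\vstab(I_1) = \cdots = \vstab(I_t) = 1$ holds.

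Next I would invoke Theorem \ref{Thm:DisjointSupports}. Under the assumption that $I_1,\dots,I_t$ have pairwise disjoint supports (implicit in the context of this section, inherited from the setting of Theorem \ref{Thm:DisjointSupports}), the theorem delivers the lower bound
$$
\v\Bigl(\bigl(\sum_{j=1}^t I_j\bigr)^k\Bigr) \ \ge\ \bigl(\min_{1\le j\le t}\alpha(I_j)\bigr)k + \bigl(\sum_{j=1}^t\alpha(I_j) - \min_{1\le j\le t}\alpha(I_j) - t\bigr)
$$
for every $k \ge 1$, with equality for all $k \ge 1$ precisely under the $\vstab = 1$ hypothesis that we have just verified. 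This yields the claimed formula.

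There is no real obstacle: the corollary is a packaging statement. The only subtlety worth flagging is the choice of which branch of Theorem \ref{Thm:DisjointSupports} to invoke. One could invoke the branch that assumes $\alpha(I_1) = \cdots = \alpha(I_t)$, but this would only give equality for $k \gg 0$; to obtain the sharp statement valid for all $k \ge 1$ without any initial-degree constraint, one must use the stability-index branch, which is exactly why recording $\vstab(I_j) = 1$ from Proposition \ref{Prop:Saha} is essential.
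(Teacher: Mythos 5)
Your proposal is correct and matches the paper's own (implicit) argument exactly: the corollary is stated as an immediate combination of Proposition \ref{Prop:Saha} with Theorem \ref{Thm:DisjointSupports}, and your key observation --- that $\vstab(I_j)=1$ from Proposition \ref{Prop:Saha} lets one invoke the stability-index branch of Theorem \ref{Thm:DisjointSupports} to get equality for all $k\ge1$ --- is precisely the intended point. You were also right to flag the pairwise-disjoint-supports hypothesis, which the corollary's statement leaves implicit but which Theorem \ref{Thm:DisjointSupports} requires.
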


In view of Example \ref{ex:Pedro} and Corollary \ref{Cor:Pedro}(c) we expect:
\begin{Conjecture}
	Let $f_1,\dots,f_m$ be a homogeneous regular sequence on $S$, and let $f_i=g_{i,1}^{a_{i,1}}\cdots g_{i,b_i}^{a_{i,b_i}}$ be the decomposition of $f_i$ into irreducible polynomials, for $i\in\{1,\dots,m\}$. Set $I=(f_1,\dots,f_m)$. Then, for all $k\ge1$,
	$$
	\v(I^k)\ =\ \alpha(I)k+\sum_{i=1}^m(\deg f_i-\max_{1\le j\le b_i}\deg g_{i,j})-\alpha(I).
	$$
\end{Conjecture}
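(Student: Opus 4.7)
The plan is to parallel Example~\ref{ex:Pedro} and Corollary~\ref{Cor:Pedro}(c), replacing the disjoint-support combinatorics by the Cohen-Macaulay structure available for any complete intersection. Since $f_1,\dots,f_m$ is a regular sequence on the Cohen-Macaulay ring $S$, the associated graded ring $\gr_I(S)$ is isomorphic to $(S/I)[T_1,\dots,T_m]$, so each $I^k/I^{k+1}$ is a free $S/I$-module; inducting on $k$ via the exact sequences $0\to I^k/I^{k+1}\to S/I^{k+1}\to S/I^k\to 0$ shows that $S/I^k$ is Cohen-Macaulay for every $k$, hence $\Ass(I^k)=\Min(I)$ for all $k\ge 1$. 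Consequently every associated prime is maximal in $\Ass(I^k)$, so Conca's Lemma~\ref{Lem:Conca} collapses to
\[
\v(I^k)\ =\ \min_{P\in\Min(I)}\alpha\bigl((I^k:P)/I^k\bigr).
\]

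For the upper bound, relabel so that $\deg f_1=\alpha(I)$, and for each $i$ fix an irreducible factor $g_i^*:=g_{i,s_i}$ of maximal degree $\delta_i:=\max_j\deg g_{i,j}$. The element
\[
h\ :=\ \frac{f_1^k}{g_1^*}\cdot\prod_{i=2}^m\frac{f_i}{g_i^*}\ \in\ S
\]
has degree exactly $\alpha(I)k+\sum_i(\deg f_i-\delta_i)-\alpha(I)$. A direct calculation gives $g_i^*h\in I^k$ for every $i$: when $i=1$, $g_1^*h=f_1^k\prod_{j\ge 2}(f_j/g_j^*)\in(f_1^k)$; when $i\ge 2$, $g_i^*h=f_1^{k-1}(f_1/g_1^*)\cdot f_i\cdot\prod_{j\ne 1,i}(f_j/g_j^*)\in(f_1^{k-1}f_i)$, both contained in $I^k$. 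Hence $(g_1^*,\dots,g_m^*)\subseteq(I^k:h)$. A module-theoretic argument in the free $S/I$-module $I^{k-1}/I^k$ should yield $h\notin I^k$ (as already verified by hand when $m=1$ in Example~\ref{ex:Pedro} and, in the monomial case, via Corollary~\ref{Cor:Pedro}(c)), so any minimal prime $P$ of $I$ containing $(I^k:h)$ lies in $\Ass(I^k)$ and delivers $\v_P(I^k)\le\deg h$.

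The matching lower bound is the main obstacle. My plan is a double induction, on $m$ and on $k$. For the induction on $m$, observe that $g_1^*,f_2,\dots,f_m$ is again a regular sequence, since the ideal it generates contains $(f_1,\dots,f_m)$ and therefore has height $m$ in the Cohen-Macaulay ring $S$; reducing modulo $g_1^*$ replaces the problem by one with the regular sequence $\bar f_2,\dots,\bar f_m$ on $S/(g_1^*)$, where one may also exploit that a regular sequence in a UFD forces $f_i$ and $f_j$ to be pairwise coprime for $i\ne j$. The delicate point is that this reduction may destroy the irreducible factorisations of the remaining $\bar f_i$'s (since $S/(g_1^*)$ is generally not a UFD), altering the $\delta_i$'s and preventing the induction from closing in its bare form. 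An alternative I would pursue in parallel is a specialisation argument: introduce fresh variables $z_{i,j}$ and form the monomial complete intersection $\tilde I=(\tilde f_1,\dots,\tilde f_m)$ with $\tilde f_i=\prod_j z_{i,j}^{a_{i,j}}$ in $K[z_{i,j}]$, for which Corollary~\ref{Cor:Pedro}(c) already gives the claimed formula; the substitution $z_{i,j}\mapsto g_{i,j}$ should then induce a degeneration along which the $\v$-number is semicontinuous in the favourable direction, so that the lower bound transfers from $\tilde I$ to $I$. Establishing this semicontinuity (a flatness/degeneration statement) and controlling the loss of information incurred when the $g_{i,j}$'s satisfy hidden algebraic relations in $S$ are the two technical points I expect to be the hardest.
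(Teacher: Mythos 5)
The statement you are proving is posed in the paper as a Conjecture: the authors give no proof, only the supporting cases $m=1$ (Example~\ref{ex:Pedro}) and monomial complete intersections (Corollary~\ref{Cor:Pedro}), so your attempt can only be judged on its own correctness. The preparatory reductions are fine: $S/I^k$ is Cohen--Macaulay for all $k$, hence $\Ass(I^k)=\Min(I)$, and Lemma~\ref{Lem:Conca} does collapse to $\v_P(I^k)=\alpha\bigl((I^k:P)/I^k\bigr)$; the verification $g_i^*h\in I^k$ and the degree count for $h$ are also correct. But the upper bound then has a genuine logical gap: from $(g_1^*,\dots,g_m^*)\subseteq(I^k:h)$ and $h\notin I^k$ (which you only assert ``should'' hold) you get $(I^k:h)\subseteq P$ for some $P\in\Ass(I^k)$, and that containment goes the wrong way. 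To conclude $\v_P(I^k)\le\deg h$ via the collapsed Conca formula you need $Ph\subseteq I^k$, i.e.\ $P\subseteq(I^k:h)$; the ideal $(g_1^*,\dots,g_m^*)$ need not be prime, and $(I^k:h)$ may be strictly smaller than every associated prime, so no witness of degree $\deg h$ is produced.

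This gap is not repairable in the stated generality, because the inequality $\v(I^k)\le\alpha(I)k+\sum_i(\deg f_i-\max_j\deg g_{i,j})-\alpha(I)$ is false as it stands: the right-hand side depends on the chosen regular sequence, not on $I$. Take $S=K[x,y,z]$, $f_1=x$, $f_2=xz+y^2$ (a homogeneous regular sequence with $f_2$ irreducible); then $I=(x,y^2)$ and the formula predicts $\v(I)=0$ for $k=1$, i.e.\ that $I$ is prime, which is false --- in fact $(I:y)=(x,y)$ and $\v(I)=1$ (and $\v(I^2)=2$, not $1$), the values predicted by the other presentation $I=(x,y^2)$ with $f_2=y^2$. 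In this example your element is $h=(f_1/g_1^*)(f_2/g_2^*)=1$, with $(I:h)=I\subseteq(x,y)$ but $\v_{(x,y)}(I)=1>0=\deg h$, so the argument breaks exactly at the unjustified step above. It also shows that neither of your proposed strategies for the missing lower bound can close the full statement: the specialisation from the monomial complete intersection $\tilde I$, or the reduction modulo $g_1^*$, would (if they worked) establish a formula valid for every presentation, which the example rules out; additional hypotheses forcing the factors of the $f_i$ to be suitably independent (as in the monomial case) are needed before a proof can even be attempted. Finally, the lower bound --- which you yourself identify as the main obstacle --- is absent, so what you have is a sketch of (at most) one inequality of an open, and as literally stated incorrect, claim; the presentation-dependence is worth reporting to the authors.
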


\textit{Acknowledgment.} We thank Kamalesh Saha for his careful reading of the manuscript, and his suggestions.

\end{document}